\definecolor{alexmcolor}{RGB}{9,6,250}
\patchcmd{\@setaddresses}{\indent}{\noindent}{}{}
\patchcmd{\@setaddresses}{\indent}{\noindent}{}{}
\patchcmd{\@setaddresses}{\indent}{\noindent}{}{}
\patchcmd{\@setaddresses}{\indent}{\noindent}{}{}
\newcommand{\tg}{\mathcal{T}}
\newcommand{\dtg}{\mathcal{D}} %placeholder for how to denote the disjoint triangulation graph of n-gon
\DeclareMathOperator{\irr}{\operatorname{Irr}}
\newtheorem{theorem}{Theorem}[section]
\newtheorem{lemma}[theorem]{Lemma}
\newtheorem{proposition}[theorem]{Proposition}
\newtheorem{corollary}[theorem]{Corollary}
\newtheorem{definition}[theorem]{Definition}
\newtheorem{problem}{Problem}
\numberwithin{equation}{section}
\tikzset
{
  triangulation1/.pic=
  {%
    \foreach \i in {1,...,5}
    {
    \node[style={circle,draw=black!100, fill=black!100}] (\i) at (\i*360/5+18:1) {};
    }
    \draw (1) -- (2) -- (3) -- (4) -- (5) -- (1);
    \draw (1) -- (3);
    \draw (1) -- (4);
  }
}
\tikzset
{
  triangulation2/.pic=
  {%
    \foreach \i in {1,...,5}
    {
    \node[style={circle,draw=black!100, fill=black!100}] (\i) at (\i*360/5+18:1) {};
    }
    \draw (1) -- (2) -- (3) -- (4) -- (5) -- (1);
    \draw (2) -- (4);
    \draw (1) -- (4);
  }
}
\tikzset
{
  triangulation3/.pic=
  {%
    \foreach \i in {1,...,5}
    {
    \node[style={circle,draw=black!100, fill=black!100}] (\i) at (\i*360/5+18:1) {};
    }
    \draw (1) -- (2) -- (3) -- (4) -- (5) -- (1);
    \draw (2) -- (5);
    \draw (2) -- (4);
  }
}
\tikzset
{
  triangulation4/.pic=
  {%
    \foreach \i in {1,...,5}
    {
    \node[style={circle,draw=black!100, fill=black!100}] (\i) at (\i*360/5+18:1) {};
    }
    \draw (1) -- (2) -- (3) -- (4) -- (5) -- (1);
    \draw (2) -- (5);
    \draw (3) -- (5);
  }
}
\tikzset
{
  triangulation5/.pic=
  {%
    \foreach \i in {1,...,5}
    {
    \node[style={circle,draw=black!100, fill=black!100}] (\i) at (\i*360/5+18:1) {};
    }
    \draw (1) -- (2) -- (3) -- (4) -- (5) -- (1);
    \draw (1) -- (3);
    \draw (3) -- (5);
  }
}
\keywords{tanglegram, sampling, triangulation, polygon, flip graph, random walk}
\subjclass[2020]{05C05, 05C30}
\begin{document}

\title[Sampling planar tanglegrams and pairs of disjoint triangulations]{Sampling planar tanglegrams and pairs of disjoint triangulations}
\author[Black]{Alexander E. Black}
\address{Department of Mathematics, University of California, Davis, CA 95616, USA}
\email{aeblack@ucdavis.edu}
\author[Liu]{Kevin Liu}
\address{Department of Mathematics, University of Washington, Seattle, WA 98125, USA}
\email{kliu15@uw.edu}
\author[McDonough]{Alex McDonough}
\address{Department of Mathematics, University of California, Davis, CA 95616, USA}
\email{amcd@ucdavis.edu}
\author[Nelson]{Garrett Nelson}
\address{Department of Mathematics, Kansas State University, Manhattan, KS 66506, USA}
\email{garrettnelson@ksu.edu}
\author[Wigal]{Michael C. Wigal}
\address{Department of Mathematics, Georgia Institute of Technology, Atlanta, GA 30318, USA}
\email{wigal@gatech.edu}
\author[Yin]{Mei Yin}
\address{Department of Mathematics, University of Denver, Denver, CO 80208, USA}
\email{mei.yin@du.edu}
\author[Yoo]{Youngho Yoo}
\address{Department of Mathematics, Texas A\&M University, College Station, TX 77840, USA}
\email{yyoo@tamu.edu}
\date{}

\maketitle

\vspace{-0.25in}
\begin{abstract}
    A tanglegram consists of two rooted binary trees and a perfect matching between their leaves, and a planar tanglegram is one that admits a layout with no crossings. We show that the problem of generating planar tanglegrams uniformly at random reduces to the corresponding problem for irreducible planar tanglegram layouts, which are known to be in bijection with pairs of disjoint triangulations of a convex polygon. We extend the flip operation on a single triangulation to a flip operation on pairs of disjoint triangulations.  Interestingly, the resulting flip graph is both connected and regular, and hence a random walk on this graph converges to the uniform distribution. We also show that the restriction of the flip graph to the pairs with a fixed triangulation in either coordinate is connected, and give diameter bounds that are near optimal. Our results furthermore yield new insight into the flip graph of triangulations of a convex $n$-gon with a geometric interpretation on the associahedron.
\end{abstract}

\section{Introduction}

A \emph{tanglegram} consists of two rooted binary trees and a perfect matching between their leaves. They initially arose in computer science and biology \cite{algorithm,charleston,reduction}. Tanglegrams are drawn in the plane using \emph{layouts} such as the ones shown in \cref{fig:introexample}. Layouts with the fewest number of crossings possible are of interest in applications such as estimating the number of horizontal gene transfers between species ~\cite{comparing}.

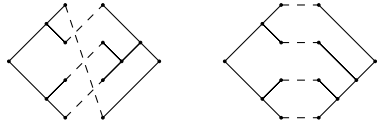
\begin{figure}[h!]
\centering 
    \begin{tikzpicture}[scale=0.5]
    \filldraw[fill=black,draw=black] (0,3) circle (1pt);
    \filldraw[fill=black,draw=black] (0,2) circle (1pt);
    \filldraw[fill=black,draw=black] (0,1) circle (1pt);
    \filldraw[fill=black,draw=black] (0,0) circle (1pt);
    \filldraw[fill=black,draw=black] (1,3) circle (1pt);
    \filldraw[fill=black,draw=black] (1,2) circle (1pt);
    \filldraw[fill=black,draw=black] (1,1) circle (1pt);
    \filldraw[fill=black,draw=black] (1,0) circle (1pt);
    \filldraw[fill=black,draw=black] (-0.5,2.5) circle (1pt);
    \filldraw[fill=black,draw=black] (-1.5,1.5) circle (1pt);
    \filldraw[fill=black,draw=black] (-0.5,0.5) circle (1pt);
    \filldraw[fill=black,draw=black] (2,2) circle (1pt);
    \filldraw[fill=black,draw=black] (1.5,1.5) circle (1pt);
    \filldraw[fill=black,draw=black] (2.5,1.5) circle (1pt);
    \draw[dashed] (0,3) -- (1,0);
    \draw[dashed] (0,2) -- (1,3);
    \draw[dashed] (0,1) -- (1,2);
    \draw[dashed] (0,0) -- (1,1);
    \draw (0,3) -- (-0.5,2.5) -- (0,2) -- (-0.5,2.5)-- (-1.5,1.5) -- (-0.5,0.5) -- (0,1) -- (-0.5,0.5) -- (0,0);
    \draw (1,3) -- (2,2) -- (1.5,1.5) -- (1,2) -- (1.5,1.5) -- (1,1) -- (2,2) -- (2.5,1.5) -- (1,0);
    %\node at (0,3.325) {$t_1$};
    %\node at (0,2.325) {$t_2$};
    %\node at (0,1.325) {$t_3$};
    %\node at (0,0.325) {$t_4$};
    %%\node at (1,3.325) {$s_1$};
    %\node at (1,2.325) {$s_2$};
    %\node at (1,1.325) {$s_3$};
    %\node at (1,0.325) {$s_4$};
    %\node at (-0.75,2.75) {$u_2$};
    %\node at (-0.75,0.25) {$u_3$};
    %\node at (-1.875,1.5) {$u_1$};
    %\node at (2.25,2.25) {$v_2$};
    %\node at (2.875,1.5) {$v_1$};
    %\node at (1.75,1.25) {$v_3$};
    \end{tikzpicture}
\qquad 
    \begin{tikzpicture}[scale=0.5]
    \filldraw[fill=black,draw=black] (0,3) circle (1pt);
    \filldraw[fill=black,draw=black] (0,2) circle (1pt);
    \filldraw[fill=black,draw=black] (0,1) circle (1pt);
    \filldraw[fill=black,draw=black] (0,0) circle (1pt);
    \filldraw[fill=black,draw=black] (1,3) circle (1pt);
    \filldraw[fill=black,draw=black] (1,2) circle (1pt);
    \filldraw[fill=black,draw=black] (1,1) circle (1pt);
    \filldraw[fill=black,draw=black] (1,0) circle (1pt);
    \filldraw[fill=black,draw=black] (-0.5,2.5) circle (1pt);
    \filldraw[fill=black,draw=black] (-1.5,1.5) circle (1pt);
    \filldraw[fill=black,draw=black] (-0.5,0.5) circle (1pt);
    \filldraw[fill=black,draw=black] (2.5,1.5) circle (1pt);
    \filldraw[fill=black,draw=black] (1.5,0.5) circle (1pt);
    \filldraw[fill=black,draw=black] (2,1) circle (1pt);
    \draw[dashed] (0,3) -- (1,3);
    \draw[dashed] (0,2) -- (1,2);
    \draw[dashed] (0,1) -- (1,1);
    \draw[dashed] (0,0) -- (1,0);
    \draw (0,3) -- (-0.5,2.5) -- (0,2) -- (-0.5,2.5)-- (-1.5,1.5) -- (-0.5,0.5) -- (0,1) -- (-0.5,0.5) -- (0,0);
    \draw (1,3) -- (2.5,1.5) -- (2,1) -- (1,2) -- (2,1) -- (1.5,0.5) -- (1,1) -- (1.5,0.5) -- (1,0);
    %\node at (0,3.325) {$t_2$};
    %\node at (0,2.325) {$t_1$};
    %\node at (0,1.325) {$t_3$};
    %\node at (0,0.325) {$t_4$};
    %\node at (1,3.325) {$s_4$};
    %\node at (1,2.325) {$s_1$};
    %\node at (1,1.325) {$s_2$};
    %\node at (1,0.325) {$s_3$};
    %\node at (7.5,1.5) {$(X,Y)=(t_2t_1t_3t_4,\,s_4s_1s_2s_3)$};
    \end{tikzpicture}
    \caption{Two layouts for the same tanglegram.}
    \label{fig:introexample}
    \end{figure}

Combinatorial interest has grown recently, and enumerating several variations of tanglegrams has been studied \cite{count,species,countingplanar}. Algorithms for uniform sampling of tanglegrams were established in \cite{count} and \cite{fusy}, and properties of random tanglegrams were studied in~\cite{randomtanglegram}. 

A tanglegram is \emph{planar} if it has a layout with no crossings, and we refer the reader to \cite{kura,liu2022,layout,countingplanar} for many interesting results about them. The second author established a characterization of all planar layouts of a planar tanglegram in \cite{liu2022} and then proposed the problem of efficiently sampling planar tanglegrams uniformly at random.
In this paper, we consider this problem. Note that one can use the algorithms in \cite{count} and \cite{fusy} to sample tanglegrams uniformly at random until a planar tanglegram is generated, but results in \cite{count} and \cite{countingplanar} imply that planar tanglegrams are rare.

In their work enumerating planar tanglegrams, Ralaivaosaona, Ravelomanana, and Wagner \cite{countingplanar} introduced \emph{irreducible} planar tanglegrams, which are planar tanglegrams that cannot be constructed from smaller tanglegrams. We make this definition precise in \cref{preliminaries}. Let $\mathcal{P}$ and $\mathcal{I}$ respectively denote the set of planar and irreducible planar tanglegrams. \cite[Theorem 1]{countingplanar} gives a relation between the generating functions
\begin{equation}\label{THx}
\begin{split}
    T(x) & =\sum_{\mathcal{T}\in \mathcal{P}}x^{|\mathcal{T}|}, \\
H(x) & =\frac{1}{2}x^2+\sum_{\mathcal{T}\in\mathcal{I}:~|\mathcal{T}|>2}x^{|\mathcal{T}|},
\end{split}
\end{equation}
where $|\mathcal{T}|$ denotes the number of leaves in the component trees of $\mathcal{T}$. Their sequences of coefficients can be found at \cite[A257887,A349408]{oeis}. Letting $\irr(\mathcal{T})$ denote the irreducible tanglegram formed by contracting each of the smaller tanglegrams in $\mathcal{T}$ to a pair of matched leaves, we generalize these generating functions to also account for $\irr(\mathcal{T})$ by defining
\begin{equation}\label{THxy}
\begin{split}
    T(x,y) & =\sum_{\mathcal{T}\in \mathcal{P}}x^{|\mathcal{T}|}y^{|\irr(\mathcal{T})|},\\
H(x,y) & =H(xy)=\frac{1}{2}x^2y^2+\sum_{\mathcal{T}\in\mathcal{I}:~|\mathcal{T}|>2}x^{|\mathcal{T}|}y^{|\mathcal{T}|}.
\end{split}
\end{equation}
We establish a generalization of the relation on $T(x)$ and $H(x)$ from \cite[Theorem 1]{countingplanar}. Note that substituting $y=1$ recovers the original result.

\begin{theorem}\label{generatingfunction}
The following holds:
\begin{equation}\label{gfequation}
    T(x,y) = H(T(x),y)+\frac{T(x^2)y^2}{2}+xy.
\end{equation}
\end{theorem}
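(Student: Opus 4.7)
The plan is to adapt the structural decomposition of planar tanglegrams used to prove the univariate identity in \cite[Theorem~1]{countingplanar}, now carrying along the extra marker $y$ that records $|\irr(\mathcal{T})|$. Every planar tanglegram $\mathcal{T}$ is obtained from its irreducible skeleton $\irr(\mathcal{T})$ by substituting a smaller planar tanglegram into each matched pair of $\irr(\mathcal{T})$, so summing this decomposition weighted by $x^{|\mathcal{T}|}y^{|\irr(\mathcal{T})|}$ should yield the right-hand side of \eqref{gfequation}.

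First I would partition $\mathcal{P}$ according to the value of $n := |\irr(\mathcal{T})|$. The case $n=1$ corresponds to the unique planar tanglegram consisting of a single matched pair and contributes the $xy$ term. For $n\geq 3$, each irreducible $\mathcal{S} \in \mathcal{I}$ of size $n$ has no non-trivial symmetry permuting its matched pairs---a planarity plus irreducibility fact implicit in the univariate proof---so the choice of a planar tanglegram at each of its $n$ pairs is a bijection onto the planar tanglegrams with $\irr(\mathcal{T}) = \mathcal{S}$. The contribution of $\mathcal{S}$ is thus $T(x)^n y^n$, and summing over all irreducible $\mathcal{S}$ with $|\mathcal{S}| \geq 3$ gives $H(T(x),y) - \tfrac{1}{2}T(x)^2 y^2$, using that $H(x,y) = H(xy)$.

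The only subtle case is $n=2$. The unique size-$2$ irreducible admits a swap symmetry exchanging its two matched pairs, which is precisely why this term carries the coefficient $\tfrac{1}{2}$ in $H$. Consequently the two inserted planar tanglegrams should be counted as an unordered pair, and a standard cycle-index (P\'olya) computation gives a total contribution of $\tfrac{1}{2}\bigl(T(x)^2 + T(x^2)\bigr)y^2$ instead of $\tfrac{1}{2}T(x)^2 y^2$. Adding the three pieces, the $\tfrac{1}{2}T(x)^2 y^2$ terms cancel, leaving $xy + H(T(x), y) + \tfrac{T(x^2)y^2}{2}$, as required.

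The main obstacle I anticipate is justifying the no-symmetry claim used in the $n\geq 3$ case: one needs to confirm that in a size-$\geq 3$ irreducible planar tanglegram the matched pairs can be unambiguously distinguished, so that no extra P\'olya-style correction appears beyond the $n=2$ term. This is essentially the same bookkeeping that underlies the original univariate identity, and the sanity check $y=1$ indeed recovers $T(x) = H(T(x)) + T(x^2)/2 + x$ from \cite[Theorem~1]{countingplanar}.
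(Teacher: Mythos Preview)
Your proposal is correct and follows essentially the same decomposition as the paper's proof: split according to $|\irr(\mathcal{T})|$, handle the size-$1$ case as $xy$, the size-$2$ case via the unordered-pair correction $\tfrac{1}{2}(T(x)^2+T(x^2))y^2$, and the size~$\geq 3$ case by substituting $T(x)$ into $H$ after removing the $\tfrac{1}{2}x^2y^2$ term. The no-symmetry claim you flag as the main obstacle is exactly what the paper invokes as \cref{prop:twolayouts} (an irreducible planar tanglegram of size~$\geq 3$ has precisely two planar layouts, mirror images of each other), so your anticipated gap is already filled by that cited result.
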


Using this result, we establish \cref{tanglegramalgorithm}, which reduces the problem of generating planar tanglegrams uniformly at random to computing coefficients of $T(x,y)$ and generating irreducible planar tanglegram layouts uniformly at random. In their work enumerating planar tanglegrams, Ralaivaosaona, Ravelomanana, and Wagner \cite{countingplanar} also established a natural bijection between  irreducible planar tanglegram layouts and pairs of triangulations of a convex polygon that do not share a diagonal. We call these \emph{pairs of disjoint triangulations}. Hence, one can instead consider the problem of generating pairs of disjoint triangulations uniformly at random.

Triangulations of a convex $n$-gon are one of many objects enumerated by the Catalan numbers. Sampling Catalan objects uniformly  or approximately uniformly at random has been an active area of research over the last several decades. Approaches include direct methods using properties of Catalan numbers \cite{atkinson,bacher,remy}, Boltzmann sampling with the generating function relation \cite{boltzmann}, and Markov chains on flip graphs of Catalan objects \cite{eppstein,mcshinetetali,molloy}. Enumerative results suggest that pairs of disjoint triangulations are significantly more complicated than individual triangulations. In particular, there is no known simple formula in $n$ for the number of pairs of disjoint triangulations of an $n$-gon, even after fixing one of the triangulations~\cite{ears,countingplanar}. 

We extend the flip operation on single triangulations to a new flip operation on the pairs of disjoint triangulations of an $n$-gon. We write $\mathcal D_n$ for the graph whose vertices are pairs of disjoint triangulations and whose edges are given by flips, and we call this the \emph{flip graph} on pairs of disjoint triangulations. The flip graph $\mathcal{D}_5$ is shown in \cref{D5}. Our flip operation implies many surprising properties for $\mathcal D_n$. In particular, we can define a \emph{Markov chain} $(X_t)$ from a random walk on $\mathcal D_n$. We set $X_0$ to be an arbitrary vertex and, for $t>0$, choose $X_t$ uniformly at random among the vertices adjacent to $X_{t-1}$.

\begin{theorem}\label{newmainthm}
For a fixed $n\ge 5$, let $(X_t)$ be the Markov chain defined above. Then as $t \rightarrow \infty$, $(X_t)$ converges to the uniform distribution on the vertices of $\mathcal D_n$ in total variance distance.
\end{theorem}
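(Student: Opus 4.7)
The proof plan is to invoke the fundamental convergence theorem for finite Markov chains: any irreducible and aperiodic Markov chain on a finite state space converges in total variation distance to its unique stationary distribution. For a simple random walk on a finite graph, the stationary distribution at a vertex $v$ is proportional to $\deg(v)$, so if the graph is regular, the stationary distribution is uniform. Thus I would reduce the theorem to three claims about $\mathcal{D}_n$: (i) $\mathcal{D}_n$ is connected, (ii) $\mathcal{D}_n$ is regular, and (iii) the random walk on $\mathcal{D}_n$ is aperiodic.

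For (i) and (ii), I would rely on the structural results about the flip operation introduced earlier in the paper; the abstract advertises both connectedness and regularity as consequences of the extended flip operation, so I would cite the corresponding lemmas/propositions proved in the body. Concretely, connectedness implies irreducibility of the chain, and regularity combined with the detailed-balance form of a simple random walk yields the uniform stationary distribution. Writing $N$ for the common degree and $|V(\mathcal{D}_n)|$ for the number of vertices, one checks directly that $\pi(D) = 1/|V(\mathcal{D}_n)|$ satisfies $\pi(D)P(D,D') = \pi(D')P(D',D)$, since both sides equal $1/(N|V(\mathcal{D}_n)|)$ whenever $D$ and $D'$ are adjacent and are zero otherwise.

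For aperiodicity (iii), I would exhibit either an odd closed walk or two closed walks of coprime length starting from some fixed pair of disjoint triangulations. The natural source of an odd cycle is the exchange structure of flips on a pentagon in either coordinate: restricting the flip action to a $5$-gon substructure produces the pentagonal cycle of length $5$ in the usual associahedron, and performing such a flip sequence while the other triangulation is held fixed produces a closed walk of odd length in $\mathcal{D}_n$. Alternatively, if one flip at all is a self-adjacency-free involution, then any closed walk of length $2$ together with any odd closed walk will establish $\gcd = 1$. The hypothesis $n \geq 5$ is precisely what makes a pentagonal flip cycle available, so the lower bound on $n$ enters here.

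The main obstacle is really front-loaded into the two structural claims (connectedness and regularity of $\mathcal{D}_n$); once those are in hand, the convergence theorem is a routine application. I would therefore spend the bulk of the write-up citing or pointing to the relevant earlier propositions establishing these properties, then assemble the three ingredients into one short paragraph invoking the standard Markov chain convergence theorem to conclude that $\|\mathrm{Law}(X_t) - \pi\|_{TV} \to 0$ as $t \to \infty$, with $\pi$ the uniform distribution on $V(\mathcal{D}_n)$.
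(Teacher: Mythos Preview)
Your overall plan is exactly what the paper does: connectedness and regularity of $\mathcal{D}_n$ are established earlier (these are \cref{simpleregular} and \cref{thm:fixedconnected}, packaged as \cref{mainthm1}), and then one checks aperiodicity and invokes the standard convergence theorem for finite irreducible aperiodic chains. So the framework is correct and matches the paper.

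The gap is in your aperiodicity argument. Your proposal to run a pentagonal $5$-cycle in one coordinate ``while the other triangulation is held fixed'' does not work for $n=5$. For any fixed triangulation $S$ of the pentagon, the induced subgraph $\tg_5(S)$ of triangulations disjoint from $S$ contains only two vertices: $S$ uses two of the five diagonals, and exactly two of the five triangulations of the pentagon avoid both of them. Hence there is no way to traverse the full pentagon cycle in the first coordinate while keeping the second coordinate fixed and disjoint; at some step a double flip is forced, after which the second coordinate has moved and the walk need not close up in five steps. For $n\geq 6$ one could hope to find a sub-pentagon none of whose five internal diagonals lie in $T_2$, but this requires an existence argument you have not supplied, and it is not automatic.

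The paper handles aperiodicity differently, by exhibiting an explicit $3$-cycle in $\mathcal{D}_n$ for every $n\geq 5$ using the pair of standard triangulations (all diagonals through vertex~$1$ in the first coordinate, all through vertex~$n$ in the second). The flip sequence $(n,2)_2,\ (1,3)_2,\ (2,4)_1$ returns to the starting pair after visiting three distinct vertices; see \cref{acyclic}. This is uniform in $n$, and because it deliberately mixes single and double flips it sidesteps the problem of keeping one coordinate frozen through a long sequence.
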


\begin{figure}[h!]
    \centering
    \scalebox{0.35}
    {
    \begin{tikzpicture}
        \draw (-12,0) -- (-4,6) -- (-4,2) -- (-12,0) -- (-4,-2) -- (-4,-6) -- (-12,0);
        \draw (12,0) -- (4,6) -- (4,2) -- (12,0) -- (4,-2) -- (4,-6) -- (12,0);
        \draw (-4,6) -- (4,6) -- (-4,2) -- (-4,-2) -- (4,-6) -- (-4,-6) -- (4,-2) -- (4,2) -- (-4,6);
        
        \draw[color=black, very thick,double,double distance=2pt] (4,2) -- (-4,6) to (-12,0) -- (-4,-6) -- (4,-2) -- (4,2);
        \draw[color=black, very thick,double,double distance=2pt] (-4,2) -- (4,6) to (12,0) -- (4,-6) -- (-4,-2) -- (-4,2);
        
        \filldraw[white] (-14.5,-1.25) rectangle (-9.5,1.25);
        \filldraw[white] (-6.5,4.75) rectangle (-1.5,7.25);
        \filldraw[white] (-6.5,0.75) rectangle (-1.5,3.25);
        \filldraw[white] (-6.5,-4.75) rectangle (-1.5,-7.25);
        \filldraw[white] (-6.5,-0.75) rectangle (-1.5,-3.25);
        \filldraw[white] (14.5,-1.25) rectangle (9.5,1.25);
        \filldraw[white] (6.5,4.75) rectangle (1.5,7.25);
        \filldraw[white] (6.5,0.75) rectangle (1.5,3.25);
        \filldraw[white] (6.5,-4.75) rectangle (1.5,-7.25);
        \filldraw[white] (6.5,-0.75) rectangle (1.5,-3.25);
        
        \pic at (-13.25,0) {triangulation2};
        \pic at (-10.75,0) {triangulation4};
        \pic at (-5.25,6) {triangulation1};
        \pic at (-2.75,6) {triangulation3};
        \pic at (-5.25,2) {triangulation1};
        \pic at (-2.75,2) {triangulation4}; 
        \pic at (-5.25,-2) {triangulation2};
        \pic at (-2.75,-2) {triangulation5};
        \pic at (-5.25,-6) {triangulation3};
        \pic at (-2.75,-6) {triangulation5};
        
        \pic at (13.25,0) {triangulation2};
        \pic at (10.75,0) {triangulation4};
        \pic at (5.25,6) {triangulation3};
        \pic at (2.75,6) {triangulation5};
        \pic at (5.25,2) {triangulation2};
        \pic at (2.75,2) {triangulation5};
        \pic at (5.25,-2) {triangulation1};
        \pic at (2.75,-2) {triangulation4};
        \pic at (5.25,-6) {triangulation1};
        \pic at (2.75,-6) {triangulation3};
    \end{tikzpicture}
    }
    \caption{The flip graph $\dtg_5$. Single lines indicate when only one triangulation is changed, and double lines indicate when both are changed.}
    \label{D5}
\end{figure}
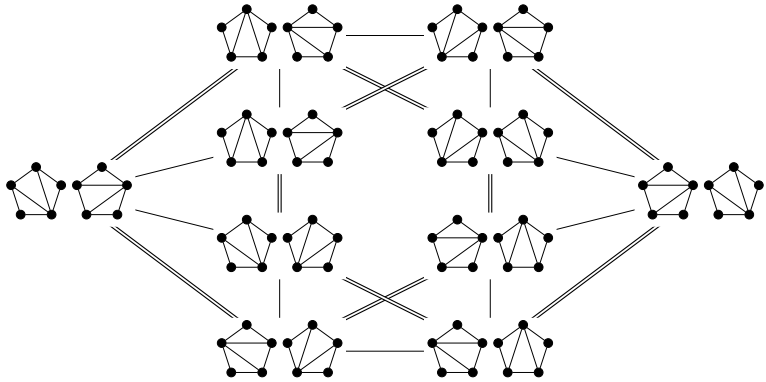

\cref{newmainthm} gives a method for near uniform sampling of pairs of disjoint triangulations. The mixing time of $X_t$ is a topic of continued study. Our general method is similar to one used by Heitsch and Tetali to study \emph{meanders}, which are also pairs of Catalan objects subject to some property~\cite{meanders}. More specifically, a {meander} is a pair of noncrossing matchings of $2n$ points that form a cycle, such as the one shown in \cref{fig:meander}. Heitsch and Tetali also constructed a Markov chain that converges to the uniform distribution, and the mixing time of their Markov chain also remains open. 

%\kevin{Maybe insert a brief description and an example of a meander}

\begin{figure}[h!]
    \centering
    \begin{tikzpicture}[scale=0.6]
    \filldraw[fill=black,draw=black] (0,0) circle (1pt);
    \filldraw[fill=black,draw=black] (1,0) circle (1pt);
    \filldraw[fill=black,draw=black] (2,0) circle (1pt);
    \filldraw[fill=black,draw=black] (3,0) circle (1pt);
    \filldraw[fill=black,draw=black] (4,0) circle (1pt);
    \filldraw[fill=black,draw=black] (5,0) circle (1pt);
    \draw[color=red] (0,0) to[bend right=90] (3,0);
    \draw[color=red] (1,0) to[bend right=90] (2,0);
    \draw[color=red] (4,0) to[bend right=90] (5,0);
    \draw[color=blue] (0,0) to[bend left=90] (5,0);
    \draw[color=blue] (1,0) to[bend left=90] (4,0);
    \draw[color=blue] (2,0) to[bend left=90] (3,0);
    \end{tikzpicture}
    \caption{An example of a meander, with each noncrossing matching color-coded.}
    \label{fig:meander}
\end{figure}
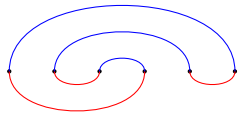

In order to prove \cref{newmainthm}, we show that $\mathcal D_n$ is regular and connected. In the process, we bound the diameter of $\mathcal D_n$. In particular, we prove the following. 

\begin{theorem}\label{mainthm1}
For any positive integer $n \geq 5$, the graph $\mathcal{D}_n$ is simple, connected, and $2(n-3)$-regular, with diameter at most $4n-16$. 
\end{theorem}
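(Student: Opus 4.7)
The plan is to address the three claims in sequence, with the flip operation on pairs as the central object.

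I would begin by formalizing the flip at a pair $(T_1,T_2)\in V(\mathcal{D}_n)$. For each diagonal $d\in T_1$, let $d'$ denote the result of the standard flip of $d$ inside $T_1$. If $d'\notin T_2$, the pair flip is the \emph{single flip} $(T_1-d+d',T_2)$; otherwise $d'\in T_2$, and the pair flip is the \emph{double flip} $(T_1-d+d',T_2-d'+d'')$, where $d''$ is the standard flip of $d'$ in $T_2$. Diagonals of $T_2$ are treated symmetrically, yielding $2(n-3)$ candidate flips at each vertex. The first task is to verify that the output always lies in $V(\mathcal{D}_n)$: in the double-flip case this amounts to $d''\notin T_1-d+d'$, which I would establish by a local geometric analysis of the two quadrilaterals sharing $d'$ in the polygon. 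Once this is done, I would check that the operation is an involution. For simplicity and $2(n-3)$-regularity, I would then argue that each flip strictly changes at least one diagonal (no loops) and that the resulting neighbor uniquely determines the flip index (no multi-edges), so that all $2(n-3)$ flips produce pairwise distinct non-identity neighbors.

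For connectivity and the diameter bound, I would fix a canonical pair $(T^{\ast},T^{\ast\ast})$, taking $T^{\ast}$ and $T^{\ast\ast}$ to be the fan triangulations at vertices $1$ and $2$, which are disjoint. It would suffice to show that any $(T_1,T_2)\in V(\mathcal{D}_n)$ can be transformed into $(T^{\ast},T^{\ast\ast})$ in at most $2n-8$ flips, since routing through the canonical pair then yields a path of length at most $4n-16$ between any two vertices. The transformation would be greedy: at each step, I would identify a diagonal of $T_1$ or $T_2$ whose flip introduces a diagonal of $T^{\ast}$ or $T^{\ast\ast}$ respectively, prioritizing single flips when possible. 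Standard flip-graph facts for a single $n$-gon triangulation guarantee such a flip exists whenever the pair is not canonical, and the $2n-8$ bound would follow from a careful accounting in which each flip reduces the number of missing canonical diagonals by at least one, while a double flip may sometimes advance both coordinates simultaneously.

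I expect the main obstacle to be the precise diameter estimate rather than connectivity itself. The double-flip case can pull the two coordinates in competing directions, so a naive sum of single-coordinate distances is inadequate. I would address this either by showing that every forced double flip still reduces a suitable potential measuring the distance to the canonical pair, or by producing an ordered sequence of flips that completes one coordinate before the other, never triggering an unwanted double flip. A secondary challenge is the geometric case analysis validating the double flip: one must classify how the quadrilateral of $d$ in $T_1$ and the quadrilateral of $d'$ in $T_2$ can sit together when $d'$ is a shared diagonal, which should reduce to a small number of configurations in the convex polygon.
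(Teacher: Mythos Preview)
Your treatment of simplicity and $2(n-3)$-regularity is sound and parallels the paper's argument. The double-flip validity you anticipate as a geometric case analysis is actually a one-liner: the diagonal $d''$ obtained by flipping $d'$ in $T_2$ necessarily crosses $d'$, and since $d'$ now lies in $T_1-d+d'$, we have $d''\notin T_1-d+d'$ immediately.

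The genuine gap is in your connectivity and diameter argument. Your option (b)---complete one coordinate, then the other---is indeed the paper's strategy, but you have not isolated the crucial step. Once the first coordinate has been driven to some triangulation $S$, adjusting the second coordinate by single flips alone means navigating the induced subgraph $\mathcal{T}_n(S)$ of triangulations disjoint from $S$. That this subgraph is connected (with diameter at most $2n-8$) is the paper's Theorem~\ref{thm:inducedconn} and is the heart of the whole proof; it is not a ``standard flip-graph fact.'' The argument requires choosing as target a fan $\Delta$ at a vertex cut off by an ear diagonal of $S$, so that every greedy flip toward $\Delta$ introduces a diagonal crossing that ear and hence absent from $S$. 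Your greedy/potential sketch does not supply this, and without it there is no reason the second-phase flips stay single.

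There is also a quantitative mismatch in routing through a fixed canonical pair $(T^\ast,T^{\ast\ast})$. Phase~1 (any $T_1$ to a fixed fan $T^\ast$) can cost $n-3$, and Phase~2 (within $\mathcal{T}_n(T^\ast)$ to its center) costs at most $n-4$ by the argument above, giving only $2n-7$ to the canonical pair and hence $4n-14$ for the diameter. The paper sidesteps this by \emph{not} routing through a canonical pair: it flips the first coordinate directly from $T_1$ to the actual target $T_3$ in at most $\operatorname{diam}(\mathcal{T}_n)\le 2n-8$ steps (letting double flips scramble the second coordinate freely), then invokes Theorem~\ref{thm:inducedconn} to correct the second coordinate in at most $2n-8$ further single flips, for a total of $4n-16$.
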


The proof of \cref{mainthm1} provides novel insight into the classical flip graph of triangulations of an $n$-gon. Namely, we show in \cref{thm:inducedconn} that the induced subgraph of triangulations of an $n$-gon disjoint from some fixed triangulation is always connected. The flip graph in this case is the set of vertices and edges of the associahedron, a polytope that plays a fundamental role in algebraic and geometric combinatorics, see \cite{ManyAssociahedra} and references therein. Our results may be rephrased within that context. The facets of the associahedron are in natural bijection with the set of diagonals of the polygon. A triangulation corresponds to a vertex of a given facet if and only if that triangulation contains the diagonal corresponding to that facet. Therefore, we have shown that for any fixed vertex $v$ of the associahedron, the induced subgraph consisting of all vertices that do not share a facet with $v$ is still connected with small diameter, a property that one can study for any polytope as discussed in \cref{op:polytope}.  
 
 This paper is organized as follows. In \cref{preliminaries}, we outline background on tanglegrams and summarize relevant results in \cite{countingplanar} on planar tanglegrams, including their connection with pairs of disjoint triangulations. In \cref{tanglegrams}, we prove \cref{generatingfunction} and apply it to construct our algorithm for sampling planar tanglegrams. In \cref{triangulations}, we define our flip graphs on pairs of disjoint triangulations and establish \cref{mainthm1,newmainthm}. We conclude in \cref{openproblems} with open problems.

\section{Preliminaries}
\label{preliminaries}

A \emph{rooted binary tree} is a tree with a distinguished vertex called the \emph{root} where each vertex has zero or two children. We consider children to be unordered, so these are different from the plane binary trees enumerated by the Catalan numbers. 

A \emph{tanglegram} $\mathcal{T}=(L,R,\sigma)$ is formed from an ordered pair of rooted binary trees $(L,R)$ with the same number of leaves and a perfect matching $\sigma$ between the leaves of $L$ and $R$. The \emph{size} of a tanglegram, denoted $|\mathcal{T}|$, is the common number of leaves in the two trees forming the tanglegram. An isomorphism between two tanglegrams $\mathcal{T}=(L,R,\sigma)$ and $\mathcal{T}'=(L',R',\sigma')$ is an isomorphism of the underlying graphs that maps $L$ to $L'$ and $R$ to $R'$.  See \cite{count,reduction} for more details. All tanglegrams in this paper are considered up to isomorphism.

Tanglegrams are drawn in the plane using \emph{layouts}, where $L$ is embedded in the plane left of the line $x=0$ with leaves on $x=0$, $R$ is embedded in the plane right of $x=1$ with leaves on $x=1$, and the matching $\sigma$ is drawn using straight lines.  A \emph{crossing} in a layout is any intersecting pair of lines induced by $\sigma$. In general, a tanglegram has multiple layouts, and a tanglegram is \emph{planar} if it has a layout in which none of the straight lines induced by $\sigma$ cross. Examples are shown in \cref{size4irreducibles}.

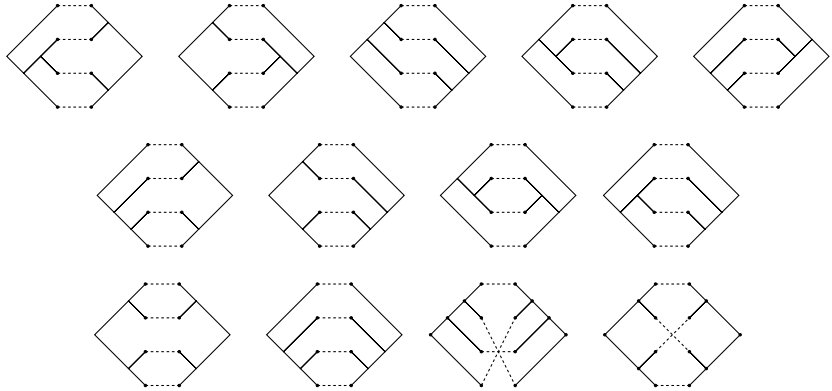
\begin{figure}[h!]
    \centering
    %1
    \begin{tikzpicture}[scale=0.45]
    \filldraw[fill=black,draw=black] (0,3) circle (1pt);
    \filldraw[fill=black,draw=black] (0,2) circle (1pt);
    \filldraw[fill=black,draw=black] (0,1) circle (1pt);
    \filldraw[fill=black,draw=black] (0,0) circle (1pt);
    \filldraw[fill=black,draw=black] (1,3) circle (1pt);
    \filldraw[fill=black,draw=black] (1,2) circle (1pt);
    \filldraw[fill=black,draw=black] (1,1) circle (1pt);
    \filldraw[fill=black,draw=black] (1,0) circle (1pt);
    \draw[dash pattern={on 1pt off 1pt}] (0,3) -- (1,3);
    \draw[dash pattern={on 1pt off 1pt}] (0,2) -- (1,2);
    \draw[dash pattern={on 1pt off 1pt}] (0,1) -- (1,1);
    \draw[dash pattern={on 1pt off 1pt}] (0,0) -- (1,0);
    \draw (0,3) -- (-1.5,1.5) -- (-1,1) -- (0,2) -- (-0.5,1.5) -- (0,1) -- (-0.5,1.5) -- (-1,1) -- (0,0);
    \draw (1,3) -- (1.5,2.5) -- (1,2) -- (1.5,2.5) -- (2.5,1.5) -- (1.5,0.5) -- (1,1) -- (1.5,0.5) -- (1,0);
    \end{tikzpicture}
    \quad 
    %2
        \begin{tikzpicture}[scale=0.45]
    \filldraw[fill=black,draw=black] (0,3) circle (1pt);
    \filldraw[fill=black,draw=black] (0,2) circle (1pt);
    \filldraw[fill=black,draw=black] (0,1) circle (1pt);
    \filldraw[fill=black,draw=black] (0,0) circle (1pt);
    \filldraw[fill=black,draw=black] (1,3) circle (1pt);
    \filldraw[fill=black,draw=black] (1,2) circle (1pt);
    \filldraw[fill=black,draw=black] (1,1) circle (1pt);
    \filldraw[fill=black,draw=black] (1,0) circle (1pt);
    \draw[dash pattern={on 1pt off 1pt}] (0,3) -- (1,3);
    \draw[dash pattern={on 1pt off 1pt}] (0,2) -- (1,2);
    \draw[dash pattern={on 1pt off 1pt}] (0,1) -- (1,1);
    \draw[dash pattern={on 1pt off 1pt}] (0,0) -- (1,0);
    \draw (0,3) -- (-0.5,2.5) -- (0,2) -- (-0.5,2.5)-- (-1.5,1.5) -- (-0.5,0.5) -- (0,1) -- (-0.5,0.5) -- (0,0);
    \draw (1,3) -- (2.5,1.5) -- (2,1) -- (1,2) -- (1.5,1.5) -- (1,1) -- (1.5,1.5) -- (2,1) -- (1,0);
    \end{tikzpicture}
        \quad 
        %3
        \begin{tikzpicture}[scale=0.45]
    \filldraw[fill=black,draw=black] (0,3) circle (1pt);
    \filldraw[fill=black,draw=black] (0,2) circle (1pt);
    \filldraw[fill=black,draw=black] (0,1) circle (1pt);
    \filldraw[fill=black,draw=black] (0,0) circle (1pt);
    \filldraw[fill=black,draw=black] (1,3) circle (1pt);
    \filldraw[fill=black,draw=black] (1,2) circle (1pt);
    \filldraw[fill=black,draw=black] (1,1) circle (1pt);
    \filldraw[fill=black,draw=black] (1,0) circle (1pt);
    \draw[dash pattern={on 1pt off 1pt}] (0,3) -- (1,3);
    \draw[dash pattern={on 1pt off 1pt}] (0,2) -- (1,2);
    \draw[dash pattern={on 1pt off 1pt}] (0,1) -- (1,1);
    \draw[dash pattern={on 1pt off 1pt}] (0,0) -- (1,0);
    \draw (0,3) -- (-0.5,2.5) -- (0,2) -- (-0.5,2.5) -- (-1,2) -- (0,1) -- (-1,2) -- (-1.5,1.5) -- (0,0);
    \draw (1,3) -- (2.5,1.5) -- (2,1) -- (1,2) -- (2,1) -- (1.5,0.5) -- (1,1) -- (1.5,0.5) -- (1,0);
    \end{tikzpicture}
            \quad 
            %4
        \begin{tikzpicture}[scale=0.45]
    \filldraw[fill=black,draw=black] (0,3) circle (1pt);
    \filldraw[fill=black,draw=black] (0,2) circle (1pt);
    \filldraw[fill=black,draw=black] (0,1) circle (1pt);
    \filldraw[fill=black,draw=black] (0,0) circle (1pt);
    \filldraw[fill=black,draw=black] (1,3) circle (1pt);
    \filldraw[fill=black,draw=black] (1,2) circle (1pt);
    \filldraw[fill=black,draw=black] (1,1) circle (1pt);
    \filldraw[fill=black,draw=black] (1,0) circle (1pt);
    \draw[dash pattern={on 1pt off 1pt}] (0,3) -- (1,3);
    \draw[dash pattern={on 1pt off 1pt}] (0,2) -- (1,2);
    \draw[dash pattern={on 1pt off 1pt}] (0,1) -- (1,1);
    \draw[dash pattern={on 1pt off 1pt}] (0,0) -- (1,0);
    \draw (0,3) -- (-1,2) -- (-0.5,1.5) -- (0,2) -- (-0.5,1.5) -- (0,1) -- (-1,2) -- (-1.5,1.5) -- (0,0);
    \draw (1,3) -- (2.5,1.5) -- (2,1) -- (1,2) -- (2,1) -- (1.5,0.5) -- (1,1) -- (1.5,0.5) -- (1,0);
    \end{tikzpicture}
    \quad 
    %5
    \begin{tikzpicture}[scale=0.45]
    \filldraw[fill=black,draw=black] (0,3) circle (1pt);
    \filldraw[fill=black,draw=black] (0,2) circle (1pt);
    \filldraw[fill=black,draw=black] (0,1) circle (1pt);
    \filldraw[fill=black,draw=black] (0,0) circle (1pt);
    \filldraw[fill=black,draw=black] (1,3) circle (1pt);
    \filldraw[fill=black,draw=black] (1,2) circle (1pt);
    \filldraw[fill=black,draw=black] (1,1) circle (1pt);
    \filldraw[fill=black,draw=black] (1,0) circle (1pt);
    \draw[dash pattern={on 1pt off 1pt}] (0,3) -- (1,3);
    \draw[dash pattern={on 1pt off 1pt}] (0,2) -- (1,2);
    \draw[dash pattern={on 1pt off 1pt}] (0,1) -- (1,1);
    \draw[dash pattern={on 1pt off 1pt}] (0,0) -- (1,0);
    \draw (0,3) -- (-1.5,1.5) -- (-1,1) -- (0,2) -- (-1,1) -- (-0.5,0.5) -- (0,1) -- (-0.5,0.5) -- (0,0);
    \draw (1,3) -- (2,2) -- (1.5,1.5) -- (1,2) -- (1.5,1.5) -- (1,1) -- (2,2) -- (2.5,1.5) -- (1,0);
    \end{tikzpicture} 
    \\
    \phantom{-}\\
    %6
    \begin{tikzpicture}[scale=0.45]
    \filldraw[fill=black,draw=black] (0,3) circle (1pt);
    \filldraw[fill=black,draw=black] (0,2) circle (1pt);
    \filldraw[fill=black,draw=black] (0,1) circle (1pt);
    \filldraw[fill=black,draw=black] (0,0) circle (1pt);
    \filldraw[fill=black,draw=black] (1,3) circle (1pt);
    \filldraw[fill=black,draw=black] (1,2) circle (1pt);
    \filldraw[fill=black,draw=black] (1,1) circle (1pt);
    \filldraw[fill=black,draw=black] (1,0) circle (1pt);
    \draw[dash pattern={on 1pt off 1pt}] (0,3) -- (1,3);
    \draw[dash pattern={on 1pt off 1pt}] (0,2) -- (1,2);
    \draw[dash pattern={on 1pt off 1pt}] (0,1) -- (1,1);
    \draw[dash pattern={on 1pt off 1pt}] (0,0) -- (1,0);
        \draw (0,3) -- (-1.5,1.5) -- (-1,1) -- (0,2) -- (-1,1) -- (-0.5,0.5) -- (0,1) -- (-0.5,0.5) -- (0,0);
 \draw (1,3) -- (1.5,2.5) -- (1,2) -- (1.5,2.5) -- (2.5,1.5) -- (1.5,0.5) -- (1,1) -- (1.5,0.5) -- (1,0);
    \end{tikzpicture}
    \quad
    %7 
        \begin{tikzpicture}[scale=0.45]
    \filldraw[fill=black,draw=black] (0,3) circle (1pt);
    \filldraw[fill=black,draw=black] (0,2) circle (1pt);
    \filldraw[fill=black,draw=black] (0,1) circle (1pt);
    \filldraw[fill=black,draw=black] (0,0) circle (1pt);
    \filldraw[fill=black,draw=black] (1,3) circle (1pt);
    \filldraw[fill=black,draw=black] (1,2) circle (1pt);
    \filldraw[fill=black,draw=black] (1,1) circle (1pt);
    \filldraw[fill=black,draw=black] (1,0) circle (1pt);
    \draw[dash pattern={on 1pt off 1pt}] (0,3) -- (1,3);
    \draw[dash pattern={on 1pt off 1pt}] (0,2) -- (1,2);
    \draw[dash pattern={on 1pt off 1pt}] (0,1) -- (1,1);
    \draw[dash pattern={on 1pt off 1pt}] (0,0) -- (1,0);
    \draw (0,3) -- (-0.5,2.5) -- (0,2) -- (-0.5,2.5)-- (-1.5,1.5) -- (-0.5,0.5) -- (0,1) -- (-0.5,0.5) -- (0,0);
    \draw (1,3) -- (2.5,1.5) -- (2,1) -- (1,2) -- (2,1) -- (1.5,0.5) -- (1,1) -- (1.5,0.5) -- (1,0);
    \end{tikzpicture}
    %8
    \quad 
    \begin{tikzpicture}[scale=0.45]
    \filldraw[fill=black,draw=black] (0,3) circle (1pt);
    \filldraw[fill=black,draw=black] (0,2) circle (1pt);
    \filldraw[fill=black,draw=black] (0,1) circle (1pt);
    \filldraw[fill=black,draw=black] (0,0) circle (1pt);
    \filldraw[fill=black,draw=black] (1,3) circle (1pt);
    \filldraw[fill=black,draw=black] (1,2) circle (1pt);
    \filldraw[fill=black,draw=black] (1,1) circle (1pt);
    \filldraw[fill=black,draw=black] (1,0) circle (1pt);
    \draw[dash pattern={on 1pt off 1pt}] (0,3) -- (1,3);
    \draw[dash pattern={on 1pt off 1pt}] (0,2) -- (1,2);
    \draw[dash pattern={on 1pt off 1pt}] (0,1) -- (1,1);
    \draw[dash pattern={on 1pt off 1pt}] (0,0) -- (1,0);
    \draw (0,3) -- (-1,2) -- (-0.5,1.5) -- (0,2) -- (-0.5,1.5) -- (0,1) -- (-1,2) -- (-1.5,1.5) -- (0,0);
    \draw (1,3) -- (2.5,1.5) -- (2,1) -- (1,2) -- (1.5,1.5) -- (1,1) -- (1.5,1.5) -- (2,1) -- (1,0);
    \end{tikzpicture}\quad 
    \begin{tikzpicture}[scale=0.45]
    \filldraw[fill=black,draw=black] (0,3) circle (1pt);
    \filldraw[fill=black,draw=black] (0,2) circle (1pt);
    \filldraw[fill=black,draw=black] (0,1) circle (1pt);
    \filldraw[fill=black,draw=black] (0,0) circle (1pt);
    \filldraw[fill=black,draw=black] (1,3) circle (1pt);
    \filldraw[fill=black,draw=black] (1,2) circle (1pt);
    \filldraw[fill=black,draw=black] (1,1) circle (1pt);
    \filldraw[fill=black,draw=black] (1,0) circle (1pt);
    \draw[dash pattern={on 1pt off 1pt}] (0,3) -- (1,3);
    \draw[dash pattern={on 1pt off 1pt}] (0,2) -- (1,2);
    \draw[dash pattern={on 1pt off 1pt}] (0,1) -- (1,1);
    \draw[dash pattern={on 1pt off 1pt}] (0,0) -- (1,0);
    \draw (0,3) -- (-1.5,1.5) -- (-1,1) -- (0,2) -- (-0.5,1.5) -- (0,1) -- (-0.5,1.5) -- (-1,1) -- (0,0);
   \draw (1,3) -- (2.5,1.5) -- (2,1) -- (1,2) -- (2,1) -- (1.5,0.5) -- (1,1) -- (1.5,0.5) -- (1,0);
    \end{tikzpicture}
    \\
    \phantom{-}\\
    \begin{tikzpicture}[scale=0.45]
    \filldraw[fill=black,draw=black] (0,3) circle (1pt);
    \filldraw[fill=black,draw=black] (0,2) circle (1pt);
    \filldraw[fill=black,draw=black] (0,1) circle (1pt);
    \filldraw[fill=black,draw=black] (0,0) circle (1pt);
    \filldraw[fill=black,draw=black] (1,3) circle (1pt);
    \filldraw[fill=black,draw=black] (1,2) circle (1pt);
    \filldraw[fill=black,draw=black] (1,1) circle (1pt);
    \filldraw[fill=black,draw=black] (1,0) circle (1pt);
    \draw[dash pattern={on 1pt off 1pt}] (0,3) -- (1,3);
    \draw[dash pattern={on 1pt off 1pt}] (0,2) -- (1,2);
    \draw[dash pattern={on 1pt off 1pt}] (0,1) -- (1,1);
    \draw[dash pattern={on 1pt off 1pt}] (0,0) -- (1,0);
    \draw (0,3) -- (-0.5,2.5) -- (0,2) -- (-0.5,2.5)-- (-1.5,1.5) -- (-0.5,0.5) -- (0,1) -- (-0.5,0.5) -- (0,0);
    \draw (1,3) -- (1.5,2.5) -- (1,2) -- (1.5,2.5) -- (2.5,1.5) -- (1.5,0.5) -- (1,1) -- (1.5,0.5) -- (1,0);
    \end{tikzpicture}
    \quad 
    %11
    \begin{tikzpicture}[scale=0.45]
    \filldraw[fill=black,draw=black] (0,3) circle (1pt);
    \filldraw[fill=black,draw=black] (0,2) circle (1pt);
    \filldraw[fill=black,draw=black] (0,1) circle (1pt);
    \filldraw[fill=black,draw=black] (0,0) circle (1pt);
    \filldraw[fill=black,draw=black] (1,3) circle (1pt);
    \filldraw[fill=black,draw=black] (1,2) circle (1pt);
    \filldraw[fill=black,draw=black] (1,1) circle (1pt);
    \filldraw[fill=black,draw=black] (1,0) circle (1pt);
    \draw[dash pattern={on 1pt off 1pt}] (0,3) -- (1,3);
    \draw[dash pattern={on 1pt off 1pt}] (0,2) -- (1,2);
    \draw[dash pattern={on 1pt off 1pt}] (0,1) -- (1,1);
    \draw[dash pattern={on 1pt off 1pt}] (0,0) -- (1,0);
        \draw (0,3) -- (-1.5,1.5) -- (-1,1) -- (0,2) -- (-1,1) -- (-0.5,0.5) -- (0,1) -- (-0.5,0.5) -- (0,0);
    \draw (1,3) -- (2.5,1.5) -- (2,1) -- (1,2) -- (2,1) -- (1.5,0.5) -- (1,1) -- (1.5,0.5) -- (1,0);
    \end{tikzpicture}\quad 
    \begin{tikzpicture}[scale=0.45]
\filldraw[fill=black,draw=black] (-0,2) circle (1pt);
\filldraw[fill=black,draw=black] (-0,1) circle (1pt);
\filldraw[fill=black,draw=black] (-0,0) circle (1pt);
\filldraw[fill=black,draw=black] (-0,3) circle (1pt);
\filldraw[fill=black,draw=black] (1,2) circle (1pt);
\filldraw[fill=black,draw=black] (1,1) circle (1pt);
\filldraw[fill=black,draw=black] (1,0) circle (1pt);
\filldraw[fill=black,draw=black] (1,3) circle (1pt);
\filldraw[fill=black,draw=black] (-0.5,2.5) circle (1pt);
\filldraw[fill=black,draw=black] (-1.5,1.5) circle (1pt);
\filldraw[fill=black,draw=black] (-1,2) circle (1pt);
\filldraw[fill=black,draw=black] (1.5,2.5) circle (1pt);
\filldraw[fill=black,draw=black] (2.5,1.5) circle (1pt);
\filldraw[fill=black,draw=black] (2,2) circle (1pt);
\draw[dash pattern={on 1pt off 1pt}] (-0,2) -- (1,0);
\draw[dash pattern={on 1pt off 1pt}] (-0,1) -- (1,1);
\draw[dash pattern={on 1pt off 1pt}] (-0,0) -- (1,2);
\draw[dash pattern={on 1pt off 1pt}] (-0,3) -- (1,3);
\draw (-0,3) -- (-0.5,2.5) -- (-0,2) -- (-0.5,2.5) -- (-1,2) -- (-0,1) -- (-1,2) -- (-1.5,1.5) -- (-0,0);
\draw (1,3) -- (1.5,2.5) -- (1,2) -- (1.5,2.5) -- (2,2) -- (1,1) -- (2,2) -- (2.5,1.5) -- (1,0);
\end{tikzpicture}
\quad 
\begin{tikzpicture}[scale=0.45]
\filldraw[fill=black,draw=black] (-0,2) circle (1pt);
\filldraw[fill=black,draw=black] (-0,1) circle (1pt);
\filldraw[fill=black,draw=black] (-0,0) circle (1pt);
\filldraw[fill=black,draw=black] (-0,3) circle (1pt);
\filldraw[fill=black,draw=black] (1,2) circle (1pt);
\filldraw[fill=black,draw=black] (1,1) circle (1pt);
\filldraw[fill=black,draw=black] (1,0) circle (1pt);
\filldraw[fill=black,draw=black] (1,3) circle (1pt);
\filldraw[fill=black,draw=black] (-0.5,2.5) circle (1pt);
\filldraw[fill=black,draw=black] (-1.5,1.5) circle (1pt);
\filldraw[fill=black,draw=black] (-0.5,0.5) circle (1pt);
\filldraw[fill=black,draw=black] (1.5,2.5) circle (1pt);
\filldraw[fill=black,draw=black] (2.5,1.5) circle (1pt);
\filldraw[fill=black,draw=black] (1.5,0.5) circle (1pt);
\draw[dash pattern={on 1pt off 1pt}] (-0,2) -- (1,1);
\draw[dash pattern={on 1pt off 1pt}] (-0,1) -- (1,2);
\draw[dash pattern={on 1pt off 1pt}] (-0,0) -- (1,0);
\draw[dash pattern={on 1pt off 1pt}] (-0,3) -- (1,3);
\draw (-0,3) -- (-0.5,2.5) -- (-0,2) -- (-0.5,2.5) -- (-1.5,1.5) -- (-0.5,0.5) -- (-0,1) -- (-0.5,0.5) -- (-0,0);
\draw (1,3) -- (1.5,2.5) -- (1,2) -- (1.5,2.5) -- (2.5,1.5) -- (1.5,0.5) -- (1,1) -- (1.5,0.5) -- (1,0);
\end{tikzpicture}
    \caption{A layout for each of the $13$ tanglegrams of size four.}
    \label{size4irreducibles}
\end{figure}

 For a tanglegram $\mathcal{T}=(L,R,\sigma)$, suppose that for the internal vertices $u\in L$ and $v\in R$, the descendants of $u$ and $v$ are matched by $\sigma$. When $u$ and $v$ are not the roots of $L$ and $R$, the subtrees rooted at $v_1$ and $v_2$ with the matching induced by $\sigma$ is a \emph{proper subtanglegram} of $\mathcal{T}$.

 A tanglegram $\mathcal{T}$ is \emph{irreducible} if it contains no proper subtanglegrams. For any tanglegram $\mathcal{T}$, its irreducible component, denoted $\irr(\mathcal{T})$, is the tanglegram obtained by contracting each maximal proper subtanglegram to a pair of matched leaves. If $\mathcal{T}'$ is an irreducible tanglegram, then $\mathcal{T}'$ \emph{extends to} $\mathcal{T}$ if $\irr(\mathcal{T})=\mathcal{T}'$. Observe that in \cref{size4irreducibles}, the last two tanglegrams are irreducible and not planar. Of the eleven planar tanglegrams, five are irreducible, three have an irreducible component of size two, and three have an irreducible component of size three.

 We now state two results of \cite{countingplanar}. Note that the first result can also be derived as a consequence of \cite[Theorem 1.1]{liu2022}.

\begin{proposition}\cite[Proposition 5]{countingplanar}
\label{prop:twolayouts}
Every irreducible planar tanglegram $\mathcal{T}$ with $|\mathcal{T}|\geq 3$ has exactly two planar layouts. Moreover, the two planar layouts are mirror images of one another.
\end{proposition}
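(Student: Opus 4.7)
The plan is to first exhibit two planar layouts via mirror reflection, and then to rule out any third by extracting a proper subtanglegram from it, contradicting irreducibility.

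For existence, any planar layout $\ell$ admits a top-to-bottom reflection $\ell^{\mathrm{rev}}$ that is also planar, since reflecting a non-crossing matching yields a non-crossing matching. When $|\mathcal{T}|\ge 3$, the leaf orderings on $L$ induced by $\ell$ and $\ell^{\mathrm{rev}}$ differ (a palindromic ordering of $\ge 3$ distinct elements is impossible), so $\ell \ne \ell^{\mathrm{rev}}$ and $\mathcal{T}$ has at least two planar layouts.

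For uniqueness, I would fix a planar layout as reference and label the leaves $1,\dots,n$ by their top-to-bottom positions so that $\sigma$ becomes the identity on $[n]$. Any other planar layout is uniquely encoded by a pair of flip sets $(F_L, F_R)$ with $F_L \subseteq V_{\mathrm{int}}(L)$ and $F_R \subseteq V_{\mathrm{int}}(R)$, specifying the internal vertices at which the choice of ``top child'' is reversed relative to the reference. Writing $\rho(F_L), \rho(F_R) \in S_n$ for the induced permutations of positions, a direct calculation shows that planarity of the flipped layout is equivalent to $\rho(F_L) = \rho(F_R)$. The reference corresponds to $(\emptyset, \emptyset)$ (identity) and the mirror to $(V_{\mathrm{int}}(L), V_{\mathrm{int}}(R))$ (full reversal $i \mapsto n+1-i$).

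The heart of the argument is the claim that any other pair $(F_L, F_R)$ with $\rho(F_L) = \rho(F_R)$ forces a proper subtanglegram, which I would prove by induction on $|\mathcal{T}|$. Let $\{A_1, A_2\}$ be the leaf sets (as subsets of $[n]$) of the two subtrees at the root of $L$, and $\{B_1, B_2\}$ analogously for $R$. The permutation $\rho$ either preserves or swaps each of these partitions, according to whether each root belongs to the corresponding flip set, giving four top-level cases. In each case, irreducibility forbids $A_i = B_j$ with $|A_i| \ge 2$ (such an equality is itself a matched-clade pair at non-root internal vertices), so an analysis of how $\rho$ must act on the intersections $A_i \cap B_j$ and restrict to the root's subtrees reveals an $L$-clade and an $R$-clade that coincide as subsets of $[n]$, giving the sought proper subtanglegram. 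The main obstacle will be the case analysis when the root splits are ``misaligned'' (i.e., $|A_1| \ne |B_1|$): here, flips inside a deeper subtree of $L$ must propagate past the root split to match those of $R$, so the inductive hypothesis cannot be applied directly to the subtrees and one must carefully track how the flip data interact with the top-level split before descending.
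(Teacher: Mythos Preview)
The paper does not prove this proposition; it is quoted from \cite{countingplanar}, with the added remark that it also follows from \cite[Theorem~1.1]{liu2022}. So there is no in-paper argument to compare against, and the question is whether your outline stands on its own.

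The existence half and the reduction to the equation $\rho(F_L)=\rho(F_R)$ are fine. The uniqueness half, however, has a genuine gap that you yourself flag: the ``misaligned'' case $|A_1|\ne|B_1|$ is exactly where the work lies, and you have not supplied it. The difficulty is structural. With $|A_1|<|B_1|$, the root clades are the nested intervals $[1,|A_1|]\subsetneq[1,|B_1|]$, so a flip at the root of $L$ reverses $[1,|A_1|]$ inside $[1,|B_1|]$ in a way no single flip on the $R$ side can reproduce; deeper flips on the two sides must conspire across the mismatched boundary. Bare induction on $|\mathcal{T}|$ restricted to subtrees does not see this interaction, because the relevant sub-permutations of $\rho$ on $A_1$ and on $B_1$ are not themselves realized by flip sets on smaller \emph{tanglegrams}. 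To close the argument one typically needs an auxiliary invariant---for example, the minimal subinterval of $[1,n]$ on which $\rho$ is neither the identity nor the full reversal---together with the observation that both $L$ and $R$ must possess a clade equal to that interval, yielding the proper subtanglegram directly and bypassing the root-level case split. As written, your proposal identifies the obstacle but does not overcome it.

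A shorter route, hinted at in the paper, is to invoke the characterization of planar layouts in \cite{liu2022}: once one knows that passing between planar layouts is generated precisely by flips at proper subtanglegrams (plus the global mirror), irreducibility immediately leaves only the two mirror layouts.
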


%\kevin{include an example}

\begin{theorem}\cite[Theorem 4]{countingplanar}\label{thm:tanglegramtotriangle}
For any $n\geq 2$, there is a bijection between the following sets:
\begin{itemize}[itemsep=0mm]
    \item the set of planar layouts of irreducible planar tanglegrams of size $n$, and
    \item the set of ordered pairs of disjoint triangulations of an $(n+1)$-gon.
\end{itemize}
\end{theorem}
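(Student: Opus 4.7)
The plan is to lift the classical bijection $\Phi$ between plane rooted binary trees with $n$ ordered leaves and triangulations of an $(n+1)$-gon with a distinguished root edge. Fix an $(n+1)$-gon with cyclic vertex labels $v_0, v_1, \ldots, v_n$, let $v_0 v_n$ be the root edge, and index the remaining boundary edges $v_{i-1}v_i$ by $i \in \{1, \ldots, n\}$. Under $\Phi$, an internal vertex whose subtree has leaf set $\{i, i+1, \ldots, j\}$ corresponds to the chord $v_{i-1}v_j$, which is a diagonal when the vertex is not the root and the root edge otherwise.

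Given a planar layout of an irreducible planar tanglegram $(L, R, \sigma)$ of size $n$, the embedding makes $L$ and $R$ into plane rooted binary trees with leaves totally ordered top-to-bottom along each vertical line. Because $\sigma$ is drawn with non-crossing straight segments between two parallel columns, it must be order-preserving, so it pairs the $i$-th leaf of $L$ with the $i$-th leaf of $R$; label both sequences by $1, \ldots, n$. Applying $\Phi$ on each side produces triangulations $T_L, T_R$ of the common $(n+1)$-gon, and the map sends the layout to the ordered pair $(T_L, T_R)$. The inverse sends $(T_L, T_R)$ to the layout whose component trees are $\Phi^{-1}(T_L), \Phi^{-1}(T_R)$ with the matching that connects leaves of the same label; this layout is automatically planar, and the two mirror-image layouts of \cref{prop:twolayouts} yield two distinct ordered pairs because reflecting the layout reverses the labeling $i \mapsto n+1-i$ on both sides.

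The heart of the proof is that the image consists exactly of pairs of \emph{disjoint} triangulations, which reduces to the following claim: $T_L$ and $T_R$ share a diagonal $v_{i-1}v_j$ (with $(i,j) \ne (1,n)$) if and only if $(L,R,\sigma)$ admits a proper subtanglegram. In the forward direction, such a shared diagonal produces a non-root internal vertex $u \in L$ and a non-root internal vertex $w \in R$ whose subtrees each have leaf set $\{i, i+1, \ldots, j\}$; since $\sigma$ pairs leaves by label, its restriction to these descendants is a perfect matching and thus exhibits a proper subtanglegram rooted at $(u,w)$. The reverse direction runs in the opposite order: a proper subtanglegram with roots $(u,w)$ forces the leaf sets of $u$ and $w$ to be a common interval $\{i, \ldots, j\} \subsetneq \{1, \ldots, n\}$, whose corresponding chord $v_{i-1}v_j$ is then a diagonal contained in both $T_L$ and $T_R$.

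The main technical obstacle is purely bookkeeping: one must verify that the polygon-vertex labeling induced from $L$'s embedding agrees with that induced from $R$'s, which hinges on the non-crossing straight-line matching being order-preserving, and one must handle the edge case that the roots of $L$ and $R$ both map to the shared root edge $v_0 v_n$ and hence never contribute to a ``shared diagonal.'' The base case $n = 2$ is checked directly, since both sides are singletons (the unique tanglegram of size two on each side, and the unique empty triangulation of a triangle).
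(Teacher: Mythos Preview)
The paper does not supply its own proof of this theorem: it is quoted from \cite{countingplanar}, and the paper only \emph{describes} the bijection via the plane-dual construction (extend root and leaves to infinity, take the dual, label the region above the root as $1$ and proceed clockwise in $L$, counterclockwise in $R$). Your map $\Phi$, which sends an internal vertex with leaf interval $\{i,\dots,j\}$ to the chord $v_{i-1}v_j$, is precisely this plane-dual bijection rephrased in coordinates; the region between the $(i-1)$-st and $i$-th leaves is exactly what the paper labels $i$, so the two descriptions agree.

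Where you go beyond the paper is in actually arguing the correspondence between irreducibility and disjointness. Your key claim---that a shared diagonal $v_{i-1}v_j$ with $(i,j)\neq(1,n)$ is equivalent to the existence of a proper subtanglegram---is correct and is the substance of the result; the paper defers this to \cite{countingplanar}. Your forward and reverse directions are sound, relying only on the fact that in a plane rooted binary tree the leaves below any internal vertex form a contiguous interval, together with the observation that a crossing-free matching between two parallel columns is order-preserving. The bookkeeping remarks about compatible labelings and about the root edge $v_0v_n$ never counting as a diagonal are exactly the right edge cases to flag, and the $n=2$ check is appropriate. In short, your proposal is correct and uses the same bijection the paper describes, while supplying the verification the paper omits.
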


We describe this bijection. Starting with the two plane binary trees in a planar layout of $\mathcal{T}=(L,R,\sigma)$, draw lines from the root and all leaves to infinity. Then take the plane dual. Label the region above the root in each tree as $1$. In $L$, proceed clockwise, and in $R$, proceed counterclockwise.  An example is shown in \cref{fig:tanglegramtotriangle}. 

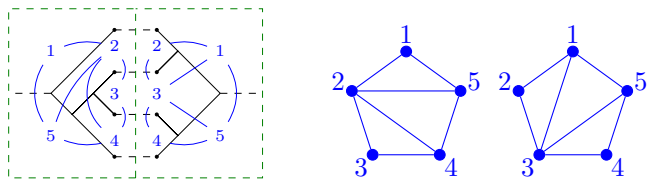
\begin{figure}[h!]
    \centering
    \scalebox{0.9}{
    \begin{tikzpicture}[scale=0.625]
    \filldraw[fill=black,draw=black] (0,3) circle (1pt);
    \filldraw[fill=black,draw=black] (0,2) circle (1pt);
    \filldraw[fill=black,draw=black] (0,1) circle (1pt);
    \filldraw[fill=black,draw=black] (0,0) circle (1pt);
    \filldraw[fill=black,draw=black] (1,3) circle (1pt);
    \filldraw[fill=black,draw=black] (1,2) circle (1pt);
    \filldraw[fill=black,draw=black] (1,1) circle (1pt);
    \filldraw[fill=black,draw=black] (1,0) circle (1pt);
    \draw[dashed] (0,3) -- (1,3);
    \draw[dashed] (0,2) -- (1,2);
    \draw[dashed] (0,1) -- (1,1);
    \draw[dashed] (0,0) -- (1,0);
    \draw[dashed] (-1.5,1.5) -- (-2.5,1.5);
    \draw[dashed] (2.5,1.5) -- (3.5,1.5);
    \draw (0,3) -- (-1.5,1.5) -- (-1,1) -- (0,2) -- (-0.5,1.5) -- (0,1) -- (-0.5,1.5) -- (-1,1) -- (0,0);
    \draw (1,3) -- (1.5,2.5) -- (1,2) -- (1.5,2.5) -- (2.5,1.5) -- (1.5,0.5) -- (1,1) -- (1.5,0.5) -- (1,0);
    \draw[color=green!50!black,dashed] (0.5,-0.5) -- (0.5,3.5) -- (-2.5,3.5) -- (-2.5,-0.5) -- (3.5,-0.5) -- (3.5,3.5) -- (0.5,3.5);
    \node[color=blue] (11) at (-1.5,2.5) {\scriptsize{$1$}};
    \node[color=blue] (12) at (0,2.625) {\scriptsize{$2$}};
    \node[color=blue] (13) at (0,1.5) {\scriptsize{$3$}};
    \node[color=blue] (14) at (0,0.375) {\scriptsize{$4$}};
    \node[color=blue] (15) at (-1.5,0.5) {\scriptsize{$5$}};
    \draw[color=blue] (11) to[bend left=15] (12);
    \draw[color=blue] (12) to[bend left=30] (13);
    \draw[color=blue] (13) to[bend left=30] (14);
    \draw[color=blue] (14) to[bend left=15] (15);
    \draw[color=blue] (15) to[bend left = 30] (11);
    \draw[color=blue] (15) to[bend left=15] (12);
    \draw[color=blue] (12) to[bend right=45] (14);
    \node[color=blue] (21) at (2.5,2.5) {\scriptsize{$1$}};
    \node[color=blue] (22) at (1,2.625) {\scriptsize{$2$}};
    \node[color=blue] (23) at (1,1.5) {\scriptsize{$3$}};
    \node[color=blue] (24) at (1,0.375) {\scriptsize{$4$}};
    \node[color=blue] (25) at (2.5,0.5) {\scriptsize{$5$}};
    \draw[color=blue] (21) to[bend right=15] (22);
    \draw[color=blue] (22) to[bend right=30] (23);
    \draw[color=blue] (23) to[bend right=30] (24);
    \draw[color=blue] (24) to[bend right=15] (25);
    \draw[color=blue] (25) to[bend right = 30] (21);
    \draw[color=blue] (21) -- (23) -- (25);
    \end{tikzpicture}
    \qquad 
    \scalebox{1.125}{
    \begin{tikzpicture}
    [every node/.style={circle, draw=blue!100, fill=blue!100, inner sep=0pt, minimum size=4pt},
    every edge/.style={draw, blue!100, thick},
    scale=.75]
    \foreach \i in {1,...,5}
    {
    \node[label={[blue]\i*360/5+18:\i},blue] (\i) at (\i*360/5+18:1) {};
    }
    \draw[color=blue] (5) -- (1) -- (2) -- (3) -- (4)-- (5);
    \draw[color=blue] (5) -- (2) -- (4);
    \end{tikzpicture}
    \begin{tikzpicture}
    [every node/.style={circle, draw=blue!100, fill=blue!100, inner sep=0pt, minimum size=4pt},
    every edge/.style={draw, blue!100, thick},
    scale=.75]
    \foreach \i in {1,...,5}
    {
    \node[label={[blue]\i*360/5+18:\i},blue] (\i) at (\i*360/5+18:1) {};
    }
    \draw[color=blue] (5) -- (1) -- (2) -- (3) -- (4)-- (5);
    \draw[color=blue] (1) -- (3) -- (5);
    \end{tikzpicture}}
    }
    \caption{The plane dual bijection from \cref{thm:tanglegramtotriangle}.}
    \label{fig:tanglegramtotriangle}
\end{figure}

\section{Planar tanglegrams}
\label{tanglegrams}

In this section, we consider the problem of uniformly sampling planar tanglegrams. We first establish \cref{generatingfunction} and its consequences. We then apply these results to reduce the problem of uniformly sampling planar tanglegrams to the corresponding problem for irreducible planar tanglegrams or their layouts in \cref{tanglegramalgorithm}.

Recall the generating functions defined in \cref{THx,THxy}. We now prove \cref{generatingfunction}, which generalizes part of \cite[Theorem 1]{countingplanar}.

\begin{proof}[Proof of \cref{generatingfunction}]
We rewrite the right hand side of \cref{gfequation} as 
\begin{equation}
    \left(H(T(x),y)-\frac{T(x)^2y^2}{2}\right)+\left(\frac{T(x)^2y^2+T(x^2)y^2}{2}\right)+xy
\end{equation}
The third summand $xy$ accounts for the unique tanglegram where both trees consist of a single vertex. For the remaining summands, we consider those with irreducible component size two and greater than two separately.

To interpret the first summand, observe that $H(x,y)-\frac{x^2y^2}{2}$ counts irreducible tanglegrams of size at least $3$, where each term $x^ky^k$ corresponds to an irreducible planar tanglegram of size $k$.
\cref{prop:twolayouts} implies that irreducible tanglegrams of size at least $3$ 
have no symmetry. Consequently, given an irreducible tanglegram $\mathcal{T}$ of size $k> 2$, fixing a layout of $\mathcal{T}$ and replacing matched leaves from top-to-bottom with planar tanglegrams $(\mathcal{T}_i)_{i=1}^k$ produces a distinct tanglegram for each selection of $(\mathcal{T}_i)_{i=1}^k$. The generating function $T(x)^k$ counts ordered $k$-tuples of planar tanglegrams, and replacing pairs of matched leaves with planar tanglegrams corresponds to replacing $x$ with $T(x)$. Hence,
$H(T(x),y)-\frac{T(x)^2y^2}{2}$ is the generating function for planar tanglegrams with $|\irr(\mathcal{T})|\geq 3$.

To interpret the second summand, tanglegrams with $|\irr(\mathcal{T})|=2$, we must
start with the unique layout for the unique tanglegram of size two and replace matched leaves with $\mathcal{T}_1$ and $\mathcal{T}_2$. However, interchanging the order of $\mathcal{T}_1$ and $\mathcal{T}_2$ produces an isomorphic tanglegram. Observe that $T(x)^2y^2$ double counts the cases when $\mathcal{T}_1\neq \mathcal{T}_2$, and $T(x^2)y^2$ counts the cases when $\mathcal{T}_1=\mathcal{T}_2$. Hence, 
$\frac{T(x)^2y^2+T(x^2)y^2}{2}$ enumerates planar tanglegrams with $|\irr(\mathcal{T})|=2$.
\end{proof}

Note that once we can efficiently compute the coefficients of $T(x)$ and $H(x)$, we can efficiently perform the composition to generate the coefficients of $T(x,y)$. Using known values of $T(x)$ and $H(x)$ from \cite[A257887,A349408]{oeis}, we give some coefficients of $T(x,y)$ in  \cref{irreducibletable}.

\begin{table}[h!]
    \centering
    \begin{tabularx}{\linewidth}{|*{9}{>{\centering\arraybackslash}X|}}
    \hline 
        $n,k$  &  2 & 3 & 4 & 5 & 6 & 7 & 8 & total \\  \hline 
        2 & 1 & & & & & & &  1\\  \hline 
        3 & 1 & 1 & & & & & &  2\\  \hline 
        4 & 3 & 3 & 5 & & & & & 11\\  \hline 
        5 & 13 & 9 & 20 & 34 & & & & 76\\  \hline 
        6 & 90 & 46 & 70 & 170 & 273 & & & 649\\  \hline 
        7 & 747 & 312 & 360 & 680 & 1638 & 2436 & & 6173\\  \hline 
        8 & 7040 & 2580 & 2435 & 3570 & 7371 & 17052 & 23391 & 63429\\ \hline 
    \end{tabularx} 
    \caption{The number of tanglegrams of size $n$ with irreducible component size $k$.}
    \label{irreducibletable}
\end{table}

We respectively use the notation $t_{n}$ and $t_{n,k}$ for the coefficient of $x^n$ in $T(x)$ and $x^ny^k$ in $T(x,y)$. We also use the notation $h_n$ for $t_{n,n}$, which is the number of irreducible planar tanglegrams of size $n$. Recall that a \emph{composition} of $n$ is a decomposition of $n$ into an ordered sum of positive integers, and we use the notation $(a_i)_{i=1}^k\models n$ to denote this. The preceding theorem and proof imply the following corollary.

\begin{corollary}\label{extension}
Let $2\leq k\leq n$. The number of ways to extend a size $k$ irreducible planar tanglegram into a size $n$ planar tanglegram, denoted $c_{n,k}$, is independent of the irreducible planar tanglegram.  Moreover,
\begin{enumerate}[label=(\alph*)]
    \item if $n$ is even, then $t_{n,2}=\frac12 \left(\sum_{i=1}^{n-1} t_it_{n-i}+t_{n/2}\right)$,
    \item if $n$ is odd, then $t_{n,2}=\frac12 \sum_{i=1}^{n-1} t_it_{n-i}$, and
    \item if $k\neq 2$, then $t_{n,k}=h_k\cdot \sum_{(a_i)_{i=1}^k\models n} t_{a_1}t_{a_2}\ldots t_{a_k}$.
\end{enumerate}
Note that $c_{n,k}=t_{n,k}/h_k$. 
\end{corollary}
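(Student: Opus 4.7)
The plan is to derive the corollary from \cref{generatingfunction} by coefficient extraction, using \cref{prop:twolayouts} to establish the independence claim. I would start by handling the independence of $c_{n,k}$ from the choice of irreducible tanglegram. For $k=2$ this is vacuous since there is a unique irreducible planar tanglegram of size two. For $k \geq 3$, \cref{prop:twolayouts} asserts that any irreducible planar tanglegram $\mathcal{T}'$ of size $k$ admits exactly two planar layouts (mirror images of one another), so $\mathcal{T}'$ has no nontrivial automorphism. Fixing one planar layout of $\mathcal{T}'$ therefore linearly orders its $k$ matched leaf pairs from top to bottom, and as in the proof of \cref{generatingfunction}, extending $\mathcal{T}'$ to a planar tanglegram of size $n$ amounts to choosing an ordered $k$-tuple $(\mathcal{T}_1, \dots, \mathcal{T}_k)$ of planar tanglegrams with $\sum_i |\mathcal{T}_i| = n$ and substituting $\mathcal{T}_i$ for the $i$th matched leaf pair. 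This count is manifestly independent of $\mathcal{T}'$ and equals $\sum_{(a_i)_{i=1}^k \models n} t_{a_1} \cdots t_{a_k}$; summing over the $h_k$ irreducible planar tanglegrams of size $k$ then yields part (c) and the identity $c_{n,k} = t_{n,k}/h_k$ in this range.

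For parts (a) and (b), I would extract the coefficient of $x^n y^2$ from both sides of \cref{gfequation}. Since $H(x,y) = H(xy)$, we have $H(T(x), y) = \tfrac{1}{2} T(x)^2 y^2 + \sum_{m \geq 3} h_m T(x)^m y^m$, so the only terms on the right-hand side contributing to $[x^n y^2]$ for $n \geq 2$ are $\tfrac{1}{2} T(x)^2 y^2$ and $\tfrac{1}{2} T(x^2) y^2$; the monomial $xy$ contributes nothing. Using the convention $t_0 = 0$, one computes $[x^n] T(x)^2 = \sum_{i=1}^{n-1} t_i t_{n-i}$ and $[x^n] T(x^2) = t_{n/2}$ when $n$ is even and $0$ when $n$ is odd; combining these yields (a) and (b). This also aligns with the combinatorial interpretation in the proof of \cref{generatingfunction}, which treats $|\irr(\mathcal{T})| = 2$ separately precisely to account for the swap symmetry between the two matched leaves of the unique size-two irreducible tanglegram.

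The argument is essentially a bookkeeping exercise on top of \cref{generatingfunction}, and the only conceptual ingredient beyond routine coefficient extraction is the rigidity of irreducible planar layouts supplied by \cref{prop:twolayouts}. I therefore expect no substantive obstacle.
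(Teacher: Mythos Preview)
Your proposal is correct and matches the paper's approach: the paper does not give a separate proof of this corollary, stating only that ``the preceding theorem and proof imply the following corollary,'' and your argument unpacks exactly those two ingredients---the rigidity of irreducible layouts from \cref{prop:twolayouts} (used in the proof of \cref{generatingfunction} to justify the substitution $x\mapsto T(x)$) for part (c) and the independence claim, and coefficient extraction from the second summand $\tfrac{1}{2}(T(x)^2+T(x^2))y^2$ for parts (a) and (b).
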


Using these $c_{n,k}$ constants, we define a procedure for uniformly sampling planar tanglegrams that encodes the techniques from \cref{generatingfunction}, assuming an algorithm for uniformly sampling irreducible planar tanglegram layouts. Hence, this reduces the problem of generating planar tanglegrams to generating irreducible planar tanglegrams or their layouts.

\begin{theorem}\label{tanglegramalgorithm}
The following procedure generates a planar tanglegram of size $n\geq 3$ uniformly at random. 
\begin{enumerate}[leftmargin=*]
    \item Choose an integer $2 \leq k \leq n$ with probability $\frac{h_k c_{n,k}}{t_n}$ and generate an irreducible planar tanglegram layout $\mathcal{L}$ of size $k$ uniformly at random.
    \item 
    \begin{enumerate}[label=(\alph*),leftmargin=*]
        \item If $k\neq 2$, select $(a_i)_{i=1}^k \models n$ with probability $\frac{t_{a_1} t_{a_2}\ldots t_{a_k}}{c_{n,k}}$ and independently generate planar tanglegrams $(\mathcal{T}_i)_{i=1}^k$ of sizes $(a_i)_{i=1}^k$ uniformly at random.
    \item If $n$ is odd and $k=2$, select $(a_1,a_2)\models n$ with probability $\frac{t_{a_1} t_{a_2}}{2c_{n,2}}$ and independently generate planar tanglegrams $(\mathcal{T}_1,\mathcal{T}_2)$ of sizes $(a_1,a_2)$ uniformly at random.
    \item If $n$ is even and $k=2$, 
    \begin{itemize}[leftmargin=*]
        \item with probability $\frac{t_{n/2}}{2c_{n,2}}$, generate a single tanglegram $\mathcal{T}_1=\mathcal{T}_2$ of size $n/2$ uniformly at random, and 
        \item otherwise, select $(a_1,a_2)\models n$ with probability $\frac{t_{a_1} t_{a_2}}{\sum_{i=1}^{n-1}t_it_{n-i}}$ and independently generate planar tanglegrams $(\mathcal{T}_1,\mathcal{T}_2)$ of sizes $(a_1,a_2)$ uniformly at random.
    \end{itemize}
    \end{enumerate}
    \item In all cases, output the tanglegram corresponding to $\mathcal{L}$ with matched leaves replaced from top to bottom by $\{\mathcal{T}_i\}_{i=1}^k$.
\end{enumerate}
\end{theorem}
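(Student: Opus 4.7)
My proof plan is to fix an arbitrary planar tanglegram $\mathcal{T}$ of size $n$ and verify that the procedure outputs $\mathcal{T}$ with probability exactly $1/t_n$. First I would check that each distribution in the procedure is well-defined: Step~1 uses the identity $\sum_{k=2}^n h_k c_{n,k} = \sum_k t_{n,k} = t_n$, and the inner distributions in Step~2 sum to $1$ directly from \cref{extension}. For instance, in Step~2a, $\sum_{(a_i)\models n} t_{a_1}\cdots t_{a_k} = c_{n,k}$ by \cref{extension}(c), while in Step~2c the two sub-probabilities $\frac{t_{n/2}}{2c_{n,2}}$ and $1-\frac{t_{n/2}}{2c_{n,2}} = \frac{\sum_{i=1}^{n-1}t_i t_{n-i}}{2c_{n,2}}$ sum to $1$ by \cref{extension}(a). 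I would then split into cases according to $k = |\irr(\mathcal{T})|$ and count the decompositions of $\mathcal{T}$ produced by the algorithm.

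For $k \geq 3$, \cref{prop:twolayouts} gives that $\irr(\mathcal{T})$ has exactly two planar layouts, which are mirror images of each other. Because $\irr(\mathcal{T})$ has no nontrivial symmetry (as used in the proof of \cref{generatingfunction}), exactly two (layout, ordered tuple) pairs $(\mathcal{L},(\mathcal{T}_i)_{i=1}^k)$ produce $\mathcal{T}$ by top-to-bottom substitution: the top-to-bottom orderings of matched leaves in the two mirror layouts are reverse of each other, so these two pairs consist of a layout and its mirror together with tuples of subtanglegrams in reverse order. Multiplying the probabilities from Steps~1 and~2a gives
\begin{equation*}
\frac{h_k c_{n,k}}{t_n}\cdot\frac{1}{2h_k}\cdot\frac{t_{a_1}\cdots t_{a_k}}{c_{n,k}}\cdot\frac{1}{t_{a_1}\cdots t_{a_k}} \;=\; \frac{1}{2t_n}
\end{equation*}
for each of the two pairs, summing to $1/t_n$.

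For $k = 2$, the size-$2$ irreducible tanglegram has a top-bottom symmetry, so swapping the two subtanglegrams in an ordered tuple produces an isomorphic output. In Step~2b ($n$ odd), the parts satisfy $a_1 \neq a_2$, hence $\mathcal{T}_1 \neq \mathcal{T}_2$, and the two tuples $(\mathcal{T}_1,\mathcal{T}_2)$ and $(\mathcal{T}_2,\mathcal{T}_1)$ each have probability $\frac{1}{2t_n}$, summing to $1/t_n$. In Step~2c ($n$ even), if the two attached subtanglegrams differ then only the second subbranch contributes, again via two ordered tuples summing to $1/t_n$; if instead the subtanglegrams coincide ($\mathcal{T}_1 = \mathcal{T}_2$, necessarily both of size $n/2$), then the first subbranch contributes $\frac{1}{2t_n}$ from directly generating the single tanglegram, and the second subbranch contributes $\frac{1}{2t_n}$ from the tuple $(\mathcal{T}_1,\mathcal{T}_1)$, once more totaling $1/t_n$.

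The main obstacle is the bookkeeping in the $k = 2$, $n$ even case: the weighting $\frac{t_{n/2}}{2c_{n,2}}$ against $1-\frac{t_{n/2}}{2c_{n,2}}$ must be chosen precisely so that the symmetric case $\mathcal{T}_1 = \mathcal{T}_2$, which can arise from either subbranch, is not double-counted while the asymmetric case $\mathcal{T}_1 \neq \mathcal{T}_2$ still receives its appropriate factor of $2$ from swapping coordinates. This is exactly the distinction between the coefficient formulas in \cref{extension}(a) and (b), and once that case is handled, all remaining verifications reduce to straightforward products of uniform and multinomial probabilities.
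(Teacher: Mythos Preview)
Your proposal is correct and follows essentially the same approach as the paper's proof: both verify well-definedness of the distributions via \cref{extension}, then fix a target tanglegram $\mathcal{T}$ and split into the cases $k=|\irr(\mathcal{T})|\ge 3$, $k=2$ with $n$ odd, and $k=2$ with $n$ even (the last further split by whether the two subtanglegrams coincide), computing in each case that the total probability equals $1/t_n$. Your treatment is slightly more explicit in noting that the two mirror layouts for $k\ge 3$ reverse the top-to-bottom ordering of matched leaves, but otherwise the arguments and computations coincide.
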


\begin{proof}
The definition of $c_{n,k}$ and the results of \cref{extension} imply that all of the necessary quantities in steps (1) and (2) sum to 1. We show that each tanglegram of size $n$ can be generated in two ways, and each of these possibilities has probability~$\frac{1}{2t_n}$. 

%We show the case of tanglegrams with irreducible component size at least $3$. The case of $|\irr(\mathcal{T})|=2$ can be shown with casework involving odd and even $n$, application of \cref{extension}, and adaptations of the below argument.

Consider a planar tanglegram $\mathcal{T}$ with $|\irr(\mathcal{T})|\geq 3$. To generate $\mathcal{T}$ in the algorithm, we must first generate one of the layouts of $\irr(\mathcal{T})$ in step (1). \cref{prop:twolayouts} implies that there are two possibilities $\mathcal{L}_1$ and $\mathcal{L}_2$, and observe that each of them has probability $\frac{h_kc_{n,k}}{t_n}\cdot \frac{1}{2h_k}=\frac{c_{n,k}}{2t_n}$ of being generated. For each $\mathcal{L}_i$, a unique list of tanglegrams $(\mathcal{T}_{i,j})_{j=1}^k$ must replace the matched leaves in $\mathcal{L}_i$ from top-to-bottom to construct $\mathcal{T}$. Letting $a_{i,j}=|\mathcal{T}_{i,j}|$, we see in that the probability $(\mathcal{T}_{i,j})_{j=1}^k$ is generated in step (2) is given by $\frac{t_{a_1}t_{a_2}\ldots t_{a_k}}{c_{n,k}}\cdot \frac{1}{t_{a_1}t_{a_2}\ldots,t_{a_k}}=\frac{1}{c_{n,k}}$. Hence, each of the two ways of generating $\mathcal{T}$ has probability $\frac{c_{n,k}}{2t_n}\cdot \frac{1}{c_{n,k}} =\frac{1}{2t_n}$ of being generated, so the probability $\mathcal{T}$ is generated is $\frac{1}{t_n}$.

Next, consider a planar tanglegram $\mathcal{T}$ with $|\irr(\mathcal{T})|=2$. This requires first generating the unique layout $\mathcal{L}$ for the unique planar tanglegram of size $2$. If $n=|\mathcal{T}|$ is odd, then two possibilities $(T_1,T_2)$ and $(T_2,T_1)$ extend $\mathcal{L}$ to $\mathcal{T}$. Letting $a_1=|\mathcal{T}_1|$ and $a_2=|\mathcal{T}_2|$, the probability of obtaining $\mathcal{T}$ is given by
\[\frac{h_2c_{n,2}}{t_n}\cdot \left(\frac{t_{a_1}t_{a_2}}{2c_{n,2}}\cdot \frac{1}{t_{a_1}t_{a_2}}+\frac{t_{a_2}t_{a_1}}{2c_{n,2}}\cdot \frac{1}{t_{a_2}t_{a_1}} \right)=\frac{1}{t_n}.\]
Note that $h_2=1$, so this term disappears in the product above. 

Now consider when $n=|\mathcal{T}|$ is even. Suppose $\mathcal{T}$ requires replacing the matched leaves in $\mathcal{L}$ with the same tanglegram $\mathcal{T}'$. With probability $\frac{t_{n/2}}{2c_{n,2}}\cdot \frac{1}{t_{n/2}}$, we generate $\mathcal{T}'$ twice in the first case of (2c), so $\mathcal{T}$ has a 
\[\frac{h_2c_{n,2}}{t_n}\cdot \frac{t_{n/2}}{2c_{n,2}}\cdot \frac{1}{t_{n/2}}=\frac{1}{2t_n}\] 
probability of being generated this way. The probability of generating $\mathcal{T}$ by generating $\mathcal{T}'$ twice in the second case of (2c) is \[\frac{h_2c_{n,2}}{t_n}\cdot \left(1-\frac{t_{n/2}}{2c_{n,2}}\cdot  \right)\cdot \frac{t_{{n/2}}t_{{n/2}}}{\sum_{i=1}^{n-1}t_it_{n-i}}\cdot \frac{1}{t_{{n/2}}t_{{n/2}}}.\]
Using \cref{extension}, $h_2=1$, and $c_{n,2}=t_{n,2}$, this simplifies to
\[\frac{c_{n,2}}{t_n}\cdot \frac{\sum_{i=1}h_ih_{n-i}}{2c_{n,2}}\cdot \frac{1}{\sum_{i=1}^{n-1}t_it_{n-i}}=\frac{1}{2t_n}.\]
The case when $\mathcal{T}$ requires replacing matched leaves in $\mathcal{L}$ with two distinct tanglegrams is done using the same properties, where we note that there is no way to generate $\mathcal{T}$ in the first case of (2c) but two ways to generate it in the second case.
\end{proof}

Note that the procedure in \cref{tanglegramalgorithm} can be applied recursively when generating $\{\mathcal{T}_i\}_{i=1}^k$, except that the tanglegrams of size 1 and 2 should be generated directly since they are unique. For efficiency reasons, one can also directly generate all planar tanglegrams below a certain size. From this, we see that uniform sampling of planar tanglegrams can be reduced to computation of the coefficients in $T(x,y)$ and  uniform sampling of irreducible planar tanglegram layouts.

\section{Pairs of disjoint triangulations}
\label{triangulations}

In this section, we consider the problem of uniformly sampling pairs of disjoint triangulations, which is equivalent to uniformly sampling irreducible planar tanglegram layouts by \cref{thm:tanglegramtotriangle}. In \cref{4.1}, we construct our flip graph $\mathcal{D}_n$ on the pairs of disjoint triangulations of an $n$-gon. We then establish \cref{mainthm1,newmainthm} using \cref{simpleregular} and \cref{thm:fixedconnected}. In \cref{4.2}, we describe the operation on irreducible planar tanglegram layouts that corresponds to flips in $\dtg_n$. 

\subsection{A flip graph on pairs of disjoint triangulations}\label{4.1}
Throughout this subsection, we fix a labeling of the convex $n$-gon using $[n]=\{1,2,\ldots, n\}$, and use pairs $(a,b)$ with $a,b\in [n]$ to denote diagonals. For a triangulation $T$, we use the notation $(a,b)\in T$ to denote that the diagonal $(a,b)$ is in the triangulation $T$. 

If $(a,b)$ is a diagonal of a triangulation $T$, then deleting $(a,b)$ creates a unique quadrilaterial $(a,a',b,b')$ for some $a',b'\in[n]$. A \emph{flip} at $(a,b) \in T$ replaces $(a,b)$ with $(a',b')$, resulting in another triangulation of the $n$-gon. We extend the flip operation to pairs of disjoint triangulations. An example is shown in \cref{fig:flip}.

\begin{definition}\label{flipdef}\normalfont
Let $(T_1,T_2)$ be an ordered pair of disjoint triangulations of an $n$-gon, and suppose $(a,b)\in T_i$ for some $i\in[2]$. A \emph{flip} at $(a,b)_i\in (T_1,T_2)$ is defined as
\begin{enumerate}[label=(\alph*)]
    \item flip $(a,b)\in T_i$, and
    \item if the resulting diagonal $(a',b')$ is in $T_j$ for $j\neq i$, then flip $(a',b')\in T_j$.
\end{enumerate}
When only (a) is performed, we refer to this as a \emph{single flip}, and when both (a) and (b) are performed, we refer to this as a \emph{double flip}.
\end{definition}

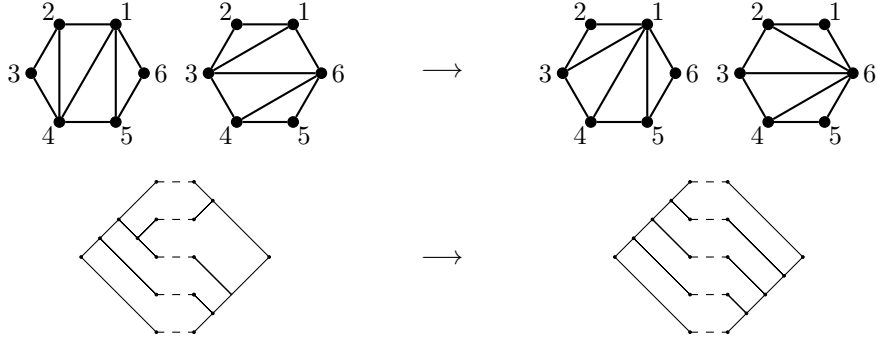
\begin{figure}[h!]
\begin{center}
\scalebox{1}{
\begin{tikzpicture}
[every node/.style={circle, draw=black!100, fill=black!100, inner sep=0pt, minimum size=4pt},
every edge/.style={draw, black!100, thick},
scale=.75]
\foreach \i in {1,...,6}
{
\node[label={\i*360/6:\i}] (\i) at (\i*360/6:1) {};
}
\draw[black!100,thick] (1) -- (2) -- (3) -- (4)-- (5)-- (6) -- (1) ;

\draw[black!100,thick] (1) -- (4);

\draw[black!100,thick] (1) -- (5);

\draw[black!100,thick] (4) -- (2);
\end{tikzpicture}
\begin{tikzpicture}
[every node/.style={circle, draw=black!100, fill=black!100, inner sep=0pt, minimum size=4pt},
every edge/.style={draw, black!100, thick},
scale=.75]
\foreach \i in {1,...,6}
{
\node[label={\i*360/6:\i}] (\i) at (\i*360/6:1) {};
}
\draw[black!100,thick] (1) -- (2) -- (3) -- (4)-- (5)-- (6) -- (1) ;

\draw[black!100,thick] (1) -- (3);

\draw[black!100,thick] (3) -- (6);

\draw[black!100,thick] (4) -- (6);
\end{tikzpicture}
\begin{tikzpicture}
\foreach \i in {1,...,6}
{
\node at (\i*360/6:1) {};
}
\node at (0,0) {$\longrightarrow$};
\end{tikzpicture}
\begin{tikzpicture}
[every node/.style={circle, draw=black!100, fill=black!100, inner sep=0pt, minimum size=4pt},
every edge/.style={draw, black!100, thick},
scale=.75]

\foreach \i in {1,...,6}
{
\node[label={\i*360/6:\i}] (\i) at (\i*360/6:1) {};
}
\draw[black!100,thick] (1) -- (2) -- (3) -- (4)-- (5)-- (6) -- (1) ;

\draw[black!100,thick] (1) -- (4);

\draw[black!100,thick] (1) -- (5);

\draw[black!100,thick] (1) -- (3);
\end{tikzpicture}
\begin{tikzpicture}
[every node/.style={circle, draw=black!100, fill=black!100, inner sep=0pt, minimum size=4pt},
every edge/.style={draw, black!100, thick},
scale=.75]
\foreach \i in {1,...,6}
{
\node[label={\i*360/6:\i}] (\i) at (\i*360/6:1) {};
}
\draw[black!100,thick] (1) -- (2) -- (3) -- (4)-- (5)-- (6) -- (1) ;

\draw[black!100,thick] (2) -- (6);

\draw[black!100,thick] (3) -- (6);

\draw[black!100,thick] (4) -- (6);
\end{tikzpicture}
}\\
\phantom{-}\\
\begin{tikzpicture}[scale=0.5]
\draw (0,4) -- (-1,3) -- (-0.5,2.5) -- (0,3) -- (-0.5,2.5) -- (0,2) -- (-1,3) -- (-1.5,2.5) -- (0,1) -- (-1.5,2.5) --  (-2,2) -- (0,0);
\draw (1,4) -- (1.5,3.5) -- (1,3) -- (1.5,3.5) -- (3,2) -- (2,1) -- (1,2) -- (2,1) -- (1.5,0.5) -- (1,1) -- (1.5,0.5) -- (1,0);
\draw[dashed] (0,4) -- (1,4);
\draw[dashed] (0,3) -- (1,3);
\draw[dashed] (0,2) -- (1,2);
\draw[dashed] (0,1) -- (1,1);
\draw[dashed] (0,0) -- (1,0);
\filldraw[fill=black,draw=black] (0,4) circle (1pt);
\filldraw[fill=black,draw=black] (0,3) circle (1pt);
\filldraw[fill=black,draw=black] (0,2) circle (1pt);
\filldraw[fill=black,draw=black] (0,1) circle (1pt);
\filldraw[fill=black,draw=black] (0,0) circle (1pt);
\filldraw[fill=black,draw=black] (1,4) circle (1pt);
\filldraw[fill=black,draw=black] (1,3) circle (1pt);
\filldraw[fill=black,draw=black] (1,2) circle (1pt);
\filldraw[fill=black,draw=black] (1,1) circle (1pt);
\filldraw[fill=black,draw=black] (1,0) circle (1pt);
\filldraw[fill=black,draw=black] (-1,3) circle (1pt);
\filldraw[fill=black,draw=black] (-0.5,2.5) circle (1pt);
\filldraw[fill=black,draw=black] (-1.5,2.5) circle (1pt);
\filldraw[fill=black,draw=black] (-2,2) circle (1pt);
\filldraw[fill=black,draw=black] (1.5,3.5) circle (1pt);
\filldraw[fill=black,draw=black] (3,2) circle (1pt);
\filldraw[fill=black,draw=black] (1.5,0.5) circle (1pt);
\filldraw[fill=black,draw=black] (1,1) circle (1pt);
\end{tikzpicture}
\qquad \qquad \quad 
\begin{tikzpicture}
\node at (0,2) {$\longrightarrow$};
\node at (0,1.125) {};
\end{tikzpicture}
\qquad \qquad \quad 
\begin{tikzpicture}[scale=0.5]
\draw (0,4) -- (-0.5,3.5) -- (0,3) -- (-0.5,3.5) -- (-1,3) -- (0,2) -- (-1,3) -- (-1.5,2.5) -- (0,1) -- (-1.5,2.5) -- (-2,2) -- (0,0);
\draw (1,4) -- (3,2) -- (2.5,1.5) -- (1,3) -- (2.5,1.5) -- (2,1) -- (1,2) -- (2,1) -- (1.5,0.5) -- (1,1) -- (1.5,0.5) -- (1,0);
\draw[dashed] (0,4) -- (1,4);
\draw[dashed] (0,3) -- (1,3);
\draw[dashed] (0,2) -- (1,2);
\draw[dashed] (0,1) -- (1,1);
\draw[dashed] (0,0) -- (1,0);
\filldraw[fill=black,draw=black] (0,4) circle (1pt);
\filldraw[fill=black,draw=black] (0,3) circle (1pt);
\filldraw[fill=black,draw=black] (0,2) circle (1pt);
\filldraw[fill=black,draw=black] (0,1) circle (1pt);
\filldraw[fill=black,draw=black] (0,0) circle (1pt);
\filldraw[fill=black,draw=black] (1,4) circle (1pt);
\filldraw[fill=black,draw=black] (1,3) circle (1pt);
\filldraw[fill=black,draw=black] (1,2) circle (1pt);
\filldraw[fill=black,draw=black] (1,1) circle (1pt);
\filldraw[fill=black,draw=black] (1,0) circle (1pt);
\filldraw[fill=black,draw=black] (-0.5,3.5) circle (1pt);
\filldraw[fill=black,draw=black] (-1,3) circle (1pt);
\filldraw[fill=black,draw=black] (-1.5,2.5) circle (1pt);
\filldraw[fill=black,draw=black] (-2,2) circle (1pt);
\filldraw[fill=black,draw=black] (1.5,0.5) circle (1pt);
\filldraw[fill=black,draw=black] (2,1) circle (1pt);
\filldraw[fill=black,draw=black] (2.5,1.5) circle (1pt);
\filldraw[fill=black,draw=black] (3,2) circle (1pt);
\end{tikzpicture}
\end{center}
\caption{A (double) flip at $(2,4)_1$ with the corresponding irreducible planar tanglegram layouts shown.}
\label{fig:flip}
\end{figure}

\begin{lemma}\label{2-flip}
%Every flip takes at most 2 steps.
Let $(T_1,T_2)$ be an ordered pair of disjoint triangulations of an $n$-gon. If $(T_1',T_2')$ is obtained from $(T_1,T_2)$ by a flip at $(a,b)_i$, then $(T_1',T_2')$ is also a pair of disjoint triangulations. Furthermore, $(T_1,T_2)$ can also be obtained from $(T_1',T_2')$ by some flip.
\end{lemma}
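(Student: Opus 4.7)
The plan is to split by whether the flip at $(a,b)_i$ is single or double, following \cref{flipdef}, and in each case verify disjointness of $(T_1', T_2')$ and construct the reverse flip explicitly.

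The single-flip case is essentially immediate. With $(a,b)\in T_i$ flipped to $(a',b')\notin T_j$, we have $T_j' = T_j$ and $T_i' = (T_i\setminus\{(a,b)\})\cup\{(a',b')\}$. Disjointness follows from $T_i \cap T_j = \emptyset$ together with $(a',b')\notin T_j$. Since the classical flip on a single triangulation is an involution, flipping $(a',b')$ in $T_i'$ returns $(a,b)$; moreover, $(a,b)\in T_i$ disjoint from $T_j$ implies $(a,b)\notin T_j = T_j'$, so this reverse operation is again a (single) flip.

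The substantive case is the double flip. Write $(a',b') = (p,q)$ and $(a'',b'') = (r,s)$, so $T_i' = (T_i\setminus\{(a,b)\})\cup\{(p,q)\}$ and $T_j' = (T_j\setminus\{(p,q)\})\cup\{(r,s)\}$. A short computation using $T_i\cap T_j = \emptyset$ and $(p,q)\neq (r,s)$ reduces the disjointness of $(T_i', T_j')$ to the single assertion $(r,s)\notin T_i'$. The key observation is that \emph{any diagonal of $T_i$ which crosses $(p,q)$ must equal $(a,b)$}: indeed, the triangulation $T_i^\ast := (T_i\setminus\{(a,b)\})\cup\{(p,q)\}$ obtained by flipping $(a,b)$ in $T_i$ has pairwise non-crossing diagonals, so no element of $T_i\setminus\{(a,b)\}$ can cross $(p,q)$. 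Since $(r,s)$ does cross $(p,q)$---they are the two diagonals of the quadrilateral in $T_j$ whose triangles are adjacent to $(p,q)$---the observation forces $(r,s) = (a,b)$ whenever $(r,s)\in T_i$. In either case $(r,s)\notin T_i'$, since $(a,b)$ was removed from $T_i$ and $(r,s)\neq (p,q)$.

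For reversibility in the double-flip case, I claim the flip at $(r,s)_j$ starting from $(T_1', T_2')$ returns $(T_1, T_2)$. Flipping $(r,s)$ in $T_j'$ inverts the flip of $(p,q)$ in $T_j$, producing $(p,q)$; since $(p,q)\in T_i'$, the operation becomes a double flip, and the induced flip of $(p,q)$ in $T_i'$ inverts the flip of $(a,b)$ in $T_i$, producing $(a,b)$. This recovers $(T_1, T_2)$ coordinate-wise. The main obstacle throughout is the disjointness assertion in the double-flip case; the geometric observation above dispatches it cleanly by exploiting the fact that $T_i^\ast$ is itself a triangulation.
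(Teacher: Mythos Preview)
Your proof is correct and follows essentially the same approach as the paper's: split into single versus double flips, and in the double case use that the second new diagonal $(r,s)$ crosses $(p,q)$, hence cannot lie in $T_i'$. Your route to $(r,s)\notin T_i'$ is slightly more roundabout than necessary---since $(p,q)\in T_i'$ and $T_i'$ is a triangulation with pairwise non-crossing diagonals, the crossing of $(r,s)$ with $(p,q)$ gives $(r,s)\notin T_i'$ directly, without the intermediate step of showing $(r,s)\in T_i$ would force $(r,s)=(a,b)$.
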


\begin{proof}
Without loss of generality, assume we flip the edge $(a,b)\in T_1$ to obtain $T_1'$. If after the flip, the new diagonal $(a',b')$ does not coincide with any diagonals of $T_2$, then we are done. Note that in this case, we have that $T_2=T_2'$, and the flip $(a',b')_2$ allows us to obtain $(T_1,T_2)$ from $(T_1',T_2')$.

Otherwise, we flip $(a',b')$ in the second polygon to obtain $T_2'$. The resulting diagonal $(a'',b'')$ crosses $(a',b')$, and hence cannot appear in $T_1'$. Hence, $(T_1',T_2')$ is a pair of disjoint triangulations. In this case, observe that the flip $(a'',b'')_2$ allows us to obtain $(T_1,T_2)$ from $(T_1',T_2')$.
\end{proof}

The mutual reachability between two pairs of disjoint triangulations allows us to now formally define the \emph{flip graph on pairs of disjoint triangulations}.

\begin{definition}\normalfont 
Let $\dtg_n$ denote the (undirected) graph with vertices corresponding to ordered pairs of disjoint triangulations of an $n$-gon and adjacency given by flips.
\end{definition}

Observe that $\dtg_3$ is a single vertex graph and $\dtg_4$ is a path graph on two vertices. Hence, we are primarily interested in the cases $n\geq 5$.

For a given $(T_1,T_2)\in \dtg_n$, there are $2(n-3)$ diagonals on which a flip can be performed.
It is not difficult to see that if $n\geq 5$, then flipping at different diagonals results in different pairs of disjoint triangulations.

\begin{lemma}\label{simpleregular}
For any integer $n\geq 5$, the flip graph $\dtg_n$ is simple and $2(n-3)$-regular.
\end{lemma}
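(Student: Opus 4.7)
The strategy is to verify two things: (i) no self-loops, and (ii) no parallel edges from any vertex. Combined with the count of $n-3$ diagonals in each of $T_1, T_2$, these give exactly $2(n-3)$ distinct neighbors per vertex and simplicity. Item (i) is immediate: a flip at $(a,b) \in T_i$ replaces it by a distinct diagonal $(a',b') \neq (a,b)$ in $T_i$, so $T_i$ genuinely changes.

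For (ii), I would suppose that flips at two distinct positions $(a,b)_i$ and $(c,d)_k$ of $(T_1,T_2)$ produce the same resulting pair and derive a contradiction. The same-coordinate case $i = k$ is bookkeeping: after flipping $(a,b)_i$, the new $T_i$ still contains the unflipped diagonal $(c,d)$, whereas after flipping $(c,d)_i$ the new $T_i$ no longer contains $(c,d)$, so the two outputs disagree.

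The cross-coordinate case $i \neq k$, say $i = 1$ and $k = 2$, is the main obstacle. For the two resulting first coordinates to coincide, the diagonal $(a,b) \in T_1$ must also be absent from the output of the flip at $(c,d)_2$. Since that flip modifies $T_1$ only when it is a double flip (and then removes exactly the image $(c',d')$ of $(c,d)$ in $T_2$), it must be a double flip with $(c',d') = (a,b)$. Symmetrically, the flip at $(a,b)_1$ must be a double flip with the image of $(a,b)$ in $T_1$ equal to $(c,d)$. Unwinding these conditions shows that the quadrilateral in $T_1$ containing $(a,b)$ as a diagonal and the quadrilateral in $T_2$ containing $(c,d)$ as a diagonal both have vertex set $\{a,b,c,d\}$ and both carry $(a,b)$ and $(c,d)$ as their two diagonals; hence they coincide as a single 4-sided region $Q$.

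The four boundary edges of $Q$ are then simultaneously edges bounding a triangulation-quadrilateral in $T_1$ and in $T_2$, so each is either a polygon side or a diagonal common to $T_1$ and $T_2$. Disjointness rules out the latter, so all four edges of $Q$ must be sides of the $n$-gon. A quadrilateral whose entire boundary lies on the boundary of the convex $n$-gon must exhaust that boundary, forcing $n = 4$ and contradicting $n \geq 5$. This closes the cross-coordinate case, and together with (i) it shows that the $2(n-3)$ flips produce $2(n-3)$ pairwise distinct neighbors, none of which equals $(T_1, T_2)$ itself, yielding both simplicity and regularity.
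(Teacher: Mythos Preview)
Your proof is correct and follows essentially the same approach as the paper's. The paper's argument is terser: it declares that it suffices to treat the cross-coordinate case $i\neq k$ (leaving the self-loop and same-coordinate cases as implicit trivialities), and then deduces $(a',b')=(c,d)$ and $(c',d')=(a,b)$ to reach the same shared-quadrilateral contradiction you obtain. Your write-up simply makes the omitted cases and the unpacking of the double-flip conditions explicit.
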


\begin{proof}
It suffices to show that for any pair $(T_1,T_2)$ of disjoint triangulations, a flip at $(a,b)\in T_1$ and a flip at $(c,d)\in T_2$  cannot result in the same pair $(T_1',T_2')$. Let $(a',b') \in T_1'$ and $(c',d')\in T_2'$ be the diagonals obtained by flipping $(a,b)\in T_1$ and $(c,d)\in T_2$ respectively.
If $(T_1',T_2')$ is obtained from $(T_1,T_2)$ by flipping $(a,b)\in T_1$, then $(a',b')=(c,d)$, and if $(T_1',T_2')$ is obtained from $(T_1,T_2)$ by flipping $(c,d)\in T_2$, then $(c',d')=(a,b)$. This implies that $(a,c,b,d)$ is a quadrilateral in both $T_1$ and $T_2$. Since $n\geq 5$, this implies that $T_1$ and $T_2$ share a diagonal, which is a contradiction.
\end{proof}

Let $\tg_n$ denote the flip graph for triangulations of an $n$-gon. For any triangulation $S$ of an $n$-gon, the subgraph of $\tg_n$ induced by the set of triangulations disjoint from $S$ is denoted $\tg_n(S)$. Pournin showed in \cite{pournin2014diameter} that the diameter of $\tg_n$ is $2n - 10$ for $n > 12$. We show connectedness of $\dtg_n$ and a linear diameter bound for $\dtg_n$ by first showing corresponding statements for $\tg_n(S)$.

\begin{theorem} \label{thm:inducedconn}
Let $n\geq 5$, and let $S$ be a triangulation of the $n$-gon. Then $\tg_n(S)$ is connected, and its diameter is at most $2n-8$.
\end{theorem}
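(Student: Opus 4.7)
The plan is to exhibit, for each triangulation $S$, a canonical target $F_v \in \mathcal{T}_n(S)$ and to show that every $T \in \mathcal{T}_n(S)$ can be transformed into $F_v$ using at most $n-4$ flips that never leave $\mathcal{T}_n(S)$. Combining two such paths through $F_v$ then yields the diameter bound of $2(n-4) = 2n-8$.

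I would choose $v$ as the apex of any \emph{ear} of $S$, i.e., a vertex with $(v-1, v+1) \in S$. Since $n\ge 5$, the triangulation $S$ has at least two ears, so such a $v$ exists; by construction $v$ has degree $2$ in $S$, so no diagonal of $S$ is incident to $v$. Let $F_v$ denote the fan triangulation at $v$, whose diagonals are exactly $\{(v,u) : u\in [n]\setminus \{v-1,v,v+1\}\}$. Every diagonal of $F_v$ is incident to $v$ and is therefore absent from $S$, so $F_v\in\mathcal{T}_n(S)$.

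The core lemma is: for every $T\in\mathcal{T}_n(S)$ with $T\ne F_v$, some flip of $T$ produces $T'\in\mathcal{T}_n(S)$ with $d_v(T')=d_v(T)+1$, where $d_v(T)$ is the number of diagonals of $T$ incident to $v$. First, since $(v-1,v+1)\in S$ cannot lie in $T$, the triangle of $T$ containing the polygon edge $(v-1,v)$ has the form $(v-1,v,w)$ with $w\ne v+1$, so $(v,w)$ is a diagonal of $T$, giving $d_v(T)\ge 1$. The diagonals of $T$ incident to $v$ partition the $n$-gon into $d_v(T)+1$ sub-polygons, each having $v$ on its boundary; since $T\ne F_v$, at least one such sub-polygon $P$ has $k\ge 4$ vertices. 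By maximality, no diagonal of $T$ interior to $P$ is incident to $v$, so the unique triangle of $T$ at $v$ inside $P$ uses both of $v$'s $P$-neighbors $w_1,w_{k-1}$, making $(w_1,w_{k-1})$ a diagonal of $T$ not incident to $v$. Flipping $(w_1,w_{k-1})$ replaces it with a new diagonal $(v,s)$; since $v$ is incident to no diagonal of $S$, this flipped triangulation is still disjoint from $S$, and $d_v$ has strictly increased.

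Iterating this flip, any $T$ reaches $F_v$ inside $\mathcal{T}_n(S)$ in at most $(n-3)-d_v(T)\le n-4$ steps, and combining paths through $F_v$ proves the diameter bound. The main (minor) obstacle is verifying the sub-polygon structure carefully, namely that the sub-polygon decomposition always has a $k\ge 4$ region when $T\ne F_v$ and that the induced triangle at $v$ in that region forces the diagonal $(w_1,w_{k-1})$ to be flippable to a diagonal incident to $v$. This all hinges on the cleanly chosen ear vertex $v$ being disjoint from every diagonal of $S$, which is precisely what makes fan triangulations at ears canonical targets for this problem.
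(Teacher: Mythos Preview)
Your proposal is correct and is essentially the same argument as the paper's. The paper normalizes so that $(2,n)\in S$, making vertex $1$ an ear of $S$, takes the fan $\Delta$ at vertex $1$ as the target, and shows by induction on $d = d_1(T)$ that one can always flip a diagonal $(i_j,i_{j+1})$ (your $(w_1,w_{k-1})$) to a diagonal incident to $1$, reaching $\Delta$ in at most $n-4$ steps; your sub-polygon language is just a coordinate-free rephrasing of finding a gap $i_{j+1}-i_j>1$.
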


\begin{proof}
Every triangulation contains a diagonal of the form $(i,i+2)$, so we assume without loss of generality that $S$ contains the diagonal $(2,n)$. Let $\Delta$ denote the triangulation consisting of $\{(1,i):3\leq i\leq n-1\}$, which is called a \emph{standard triangulation} in Chapter $1$ of \cite{TriangulationsBible}. We show that every triangulation $T$ disjoint from $S$ is connected to $\Delta$ by a path in $\tg_n(S)$ of length at most $n-4$. In fact, we claim that if $T$ contains $d$ edges of the form $\{(1,i):3\leq i\leq n-1\}$, then it is connected to $\Delta$ by a path in $\tg_n(S)$ of length at most $n-3-d$. Note that $d\geq 1$, as $(2,n)\notin T$ implies the existence of some diagonal of the form $(1,i)$.

We prove the claim by induction on $n-3-d$. If $d=n-3$, then $T=\Delta$, and these triangulations are connected by a path of length $n-3-(n-3)=0$ as needed. Now suppose $d<n-3$, and let $3 \le i_1<i_2<\ldots<i_d\le n-1$ denote the indices such that $(1,i_j)\in T$ for all $i_j$. Since $d<n-3$, it must be that $i_{j+1}-i_j>1$ for some $j\in \{1,\ldots,d\}$. Consider the polygon with vertices $\{1,i_j,i_j+1,i_j+2,\ldots,i_{j+1}\}$ with triangulation $T_1$ induced by $T$. Observe that $T_1$ cannot contain any edges of the form $(1,i)$, and since $i_j,1,i_{j+1}$ are consecutive vertices, this can only occur if $(i_j,i_{j+1})\in T_1$. Flipping this diagonal results in $(1,i')$ for some $i_j<i'<i_{j+1}$. Note that this diagonal $(1,i')$ cannot appear in $S$ since $S$ contains $(2,n)$. Hence, flipping $(i_j,i_{j+1})$ in $T$ results in some triangulation $T'$ disjoint from $S$ that contains $d+1$ diagonals of the form $\{(1,i):3\leq i\leq n-1\}$. By the inductive hypothesis, $T'$ and $\Delta$ are connected by a path in $\tg_n(S)$ of length at most $n-3-d-1$, and hence $T$ is connected to $\Delta$ by a path in $\tg_n(S)$ of length at most $n-3-d$. %By induction, the claim follows.

Now let $T_1$ and $T_2$ be any two triangulations disjoint from $S$. Choosing $\Delta$ as above, we see that each $T_i$ is connected to $\Delta$ by a path of length at most $n-4$ in $\tg_n(S)$, implying $T_1$ and $T_2$ are connected by a path of length at most $2n-8$.
\end{proof}

Pournin's result for $\text{diam}(\tg_n)$ when $n>12$ combined with known values for $5\leq n\leq 12$ imply that for all $n\geq 5$, we have that $\text{diam}(\tg_n)\leq 2n-8$. Hence, the above result implies the following statements for $\dtg_n$.

\begin{corollary} \label{thm:fixedconnected}
For $n\geq 5$, the flip graph $\dtg_n$ is connected and %has diameter at most $\text{diam}(\tg_n)+2n-8$.
\[\text{diam}(\dtg_n) \leq \text{diam}(\tg_n)+2n-8 \leq 4n-16.\]
\end{corollary}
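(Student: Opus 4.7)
The plan is to prove the corollary in two phases, reducing it to the diameter bound on $\tg_n$ and to \cref{thm:inducedconn}. Fix two pairs $(T_1,T_2)$ and $(T_1',T_2')$ in $\dtg_n$.

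In the first phase, I would transform the first coordinate. Take a shortest path $T_1 = T^{(0)}, T^{(1)}, \ldots, T^{(k)} = T_1'$ in $\tg_n$, so $k \le \mathrm{diam}(\tg_n)$. Perform the same sequence of flips on the first coordinate of $\dtg_n$, starting from $(T_1,T_2)$; by \cref{flipdef} each such operation is a single or double flip depending on whether the produced diagonal clashes with the current second coordinate, and by \cref{2-flip} each produces a valid vertex of $\dtg_n$. After $k$ steps we arrive at a pair $(T_1',T_2'')$ for some triangulation $T_2''$ disjoint from $T_1'$.

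In the second phase, I would transform $T_2''$ into $T_2'$ while keeping $T_1'$ fixed. Both $T_2''$ and $T_2'$ are vertices of $\tg_n(T_1')$, so by \cref{thm:inducedconn} they are connected by a path in $\tg_n(T_1')$ of length at most $2n-8$. The key observation is that every flip along such a path induces only a \emph{single} flip in $\dtg_n$: if $(a,b)$ is the flipped diagonal and $(a',b')$ the resulting one, then $(a',b')$ belongs to the next triangulation on the path, which is still disjoint from $T_1'$, so $(a',b') \notin T_1'$ and step (b) of \cref{flipdef} is not triggered. Hence this phase gives at most $2n-8$ flips in $\dtg_n$, all of which fix the first coordinate at $T_1'$.

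Concatenating the two phases yields a walk in $\dtg_n$ from $(T_1,T_2)$ to $(T_1',T_2')$ of length at most $\mathrm{diam}(\tg_n) + 2n-8$, proving connectedness and the first inequality. For the second inequality, I would invoke Pournin's theorem that $\mathrm{diam}(\tg_n) = 2n-10$ for $n>12$, together with the known small-case values for $5 \le n \le 12$, to conclude $\mathrm{diam}(\tg_n) \le 2n-8$ uniformly for $n \ge 5$. The main conceptual obstacle is the asymmetry between the two phases: during the first phase the second coordinate is allowed to drift via double flips, whereas during the second phase $T_1'$ must be preserved, and this is precisely what forces the statement of \cref{thm:inducedconn} to be about the induced subgraph $\tg_n(T_1')$ rather than about $\tg_n$ itself.
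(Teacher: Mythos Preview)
Your proposal is correct and follows essentially the same two-phase approach as the paper: first flip the first coordinate along a geodesic in $\tg_n$ (allowing the second coordinate to drift via double flips), then use \cref{thm:inducedconn} on $\tg_n(T_1')$ to fix the second coordinate with single flips, and finally invoke Pournin's bound together with the small cases for the second inequality. The paper's proof is terser but structurally identical; your added justification that phase~2 triggers no double flips is a helpful detail that the paper leaves implicit.
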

\begin{proof}
Let $(T_1,T_2), (T_3,T_4)\in \dtg_n$.
Then there is a path of length at most $\text{diam}(\tg_n)$ in $\dtg_n$ from $(T_1,T_2)$ to $(T_3,T)$ for some $T$ disjoint from $T_3$. By Theorem \ref{thm:inducedconn}, there is a path from $(T_3,T)$ to $(T_3,T_4)$ of length at most $2n-8$ in $\dtg_n$.
\end{proof}

It is well known that a random walk on a regular connected graph converges to the uniform distribution in total variation distance if it is aperiodic. Aperiodicity follows from the fact that $\mathcal{D}_n$ contains cycles of size $3$ for all $n\ge 5$, which is shown using the standard triangulations in \cref{acyclic}. This implies that \cref{newmainthm} follows from \cref{mainthm1}, which is immediate from \cref{simpleregular} and \cref{thm:fixedconnected}. 

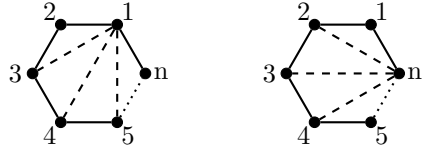
\begin{figure}[h!]
\centering
\scalebox{1}{
    \begin{tikzpicture}
[every node/.style={circle, draw=black!100, fill=black!100, inner sep=0pt, minimum size=4pt},
every edge/.style={draw, black!100, thick},
scale=.75]

\foreach \i in {1,...,5}
{
\node[label={\i*360/6:\i}] (\i) at (\i*360/6:1) {};
}
\node[label={0:n}] (n) at (0:1) {};
\draw[black!100,thick] (n) -- (1) -- (2) -- (3) -- (4)-- (5);
\draw[black!100,thick,dotted] (5) -- (n); 
\draw[black!100,thick,dashed] (1) -- (3);
\draw[black!100,thick,dashed] (1) -- (4);
\draw[black!100,thick,dashed] (1) -- (5);
\end{tikzpicture}
\quad \quad \quad
\begin{tikzpicture}
[every node/.style={circle, draw=black!100, fill=black!100, inner sep=0pt, minimum size=4pt},
every edge/.style={draw, black!100, thick},
scale=.75]

\foreach \i in {1,...,5}
{
\node[label={\i*360/6:\i}] (\i) at (\i*360/6:1) {};
}
\node[label={0:n}] (n) at (0:1) {};
\draw[black!100,thick] (n) -- (1) -- (2) -- (3) -- (4)-- (5);

\draw[black!100,thick,dotted] (5) -- (n); 
\draw[black!100,thick,dashed] (n) -- (2);
\draw[black!100,thick,dashed] (n) -- (3);
\draw[black!100,thick,dashed] (n) -- (4);
\end{tikzpicture}}
\caption{A pair of disjoint triangulations where two sequences of flips $(n,2)_2,(2,4)_1$ and $(n,2)_2$, $(1,3)_2$, $(2,4)_1$ both result in the original pair again.}
\label{acyclic}
\end{figure}

\subsection{Rotations of irreducible planar tanglegram layouts}\label{4.2}

In this subsection, we establish the operation on planar layouts of irreducible planar tanglegrams that corresponds to flips in pairs of disjoint triangulations. Our operation uses the well-known rotation on plane binary trees, which is given in \cref{fig:rotation}. 
Note that our resulting correspondence is not the first of its kind, and we refer the reader to \cite[Lemma 1]{sleator} for one such example.

\begin{figure}[h!]
    \centering
    \begin{tikzpicture}
    \draw (0,0.25) -- (0,0) -- (1,-1) -- (1.375,-1.75) -- (0.625,-1.75) -- (1,-1);
    \draw (0,0) -- (-1,-1) -- (-1.5,-1.5) -- (-1.875,-2.25) -- (-1.125,-2.25) -- (-1.5,-1.5);
    \draw (-1,-1) -- (-0.5,-1.5) -- (-0.875,-2.25) -- (-0.125,-2.25) -- (-0.5,-1.5);
    \node at (0,0.5) {$\vdots$};
    \node at (1,-1.5) {$P_3$};
    \node at (-0.5,-2) {$P_2$};
    \node at (-1.5,-2) {$P_1$};
    \filldraw[fill=black,draw=black] (0,0) circle (1pt);
    \filldraw[fill=black,draw=black] (-1,-1) circle (1pt);
    \node at (-0.25,0) {$u$};
    \node at (-1.25,-1) {$v$};
    \node[color=blue] (a) at (-1,-0.5) {$a$};
    \node[color=blue] (b) at (-1,-1.5) {$b$};
    \node[color=blue] (c) at (0,-1) {$c$};
    \node[color=blue] (d) at (1,-0.5) {$d$};
    \draw[color=blue] (a) to[bend right=90] (b);
    \draw[color=blue] (b) -- (c) -- (d);
    \draw[color=blue] (a) -- (c);
    \draw[color=blue] (a) to[bend left=90] (d);
    \end{tikzpicture}
    \quad 
    \begin{tikzpicture}
    \draw[->] (-1,0.25) to[bend left=45] (1,0.25);
    \draw[<-] (-1,-0.25) to[bend right=45] (1,-0.25);
    \node at (0,1) {rotation at $v$};
    \node at (0,-1) {rotation at $u$};
    \end{tikzpicture}
    \quad 
    \begin{tikzpicture}
    \draw (0,0.25) -- (0,0) -- (-1,-1) -- (-1.375,-1.75) -- (-0.625,-1.75) -- (-1,-1);
    \draw (0,0) -- (1,-1) -- (1.5,-1.5) -- (1.875,-2.25) -- (1.125,-2.25) -- (1.5,-1.5);
    \draw (1,-1) -- (0.5,-1.5) -- (0.875,-2.25) -- (0.125,-2.25) -- (0.5,-1.5);
    \node at (0,0.5) {$\vdots$};
    \node at (-1,-1.5) {$P_1$};
    \node at (0.5,-2) {$P_2$};
    \node at (1.5,-2) {$P_3$};
    \filldraw[fill=black,draw=black] (0,0) circle (1pt);
    \filldraw[fill=black,draw=black] (1,-1) circle (1pt);
    \node at (0.25,0) {$v$};
    \node at (1.25,-1) {$u$};
    \node[color=blue] (a) at (-1,-0.5) {$a$};
    \node[color=blue] (c) at (1,-1.5) {$c$};
    \node[color=blue] (b) at (0,-1) {$b$};
    \node[color=blue] (d) at (1,-0.5) {$d$};
    \draw[color=blue] (c) to[bend right=90] (d);
    \draw[color=blue] (b) -- (d);
    \draw[color=blue] (a) -- (b) --  (c);
    \draw[color=blue] (a) to[bend left=90] (d);
    \end{tikzpicture}
    \caption{The rotation operation on rooted binary trees. A portion of the plane duals are shown in blue.}
    \label{fig:rotation}
\end{figure}
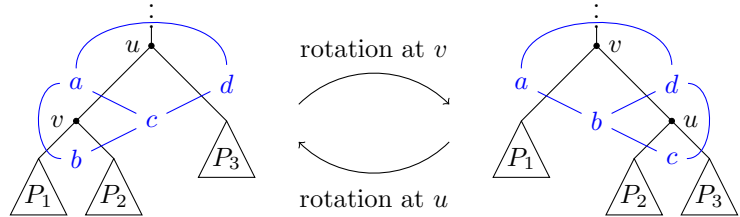

\begin{lemma}\label{onerotation}
Let $P$ be a rooted plane tree and $T$ be its plane dual triangulation. The following operations correspond:
\begin{itemize}
    \item a rotation at vertex $v\in V(P)$, and
    \item a flip at the diagonal in $T$ corresponding to the two regions separated by the edge between $v$ and its parent.
\end{itemize}
\end{lemma}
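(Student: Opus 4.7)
The plan is to establish this correspondence by a purely local analysis: a rotation at $v$ only alters the tree within the gadget depicted in \cref{fig:rotation}, and hence it only alters the plane dual in a matching neighborhood around the edge $e=(u,v)$.

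First, I would identify the four faces of the planar arrangement (the tree together with the rays to infinity from the root and the leaves) that meet along the edges incident to $u$ and $v$ in the left-hand picture of \cref{fig:rotation}. These are the regions labeled $a,b,c,d$, and a direct inspection shows that $a$ and $c$ are the two faces on opposite sides of $e$, while $b$ is the face immediately below $v$ (between the roots of $P_1$ and $P_2$) and $d$ is the face immediately to the right of $u$ (above the root of $P_3$). Consequently, in the dual triangulation the edge dual to $e$ is precisely the diagonal from $a$ to $c$ of the quadrilateral $abcd$, which splits it into the two triangles $abc$ and $acd$.

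Next, I would apply the rotation at $v$ and repeat this inspection on the right-hand picture of \cref{fig:rotation}. The boundary of the quadrilateral region formed by the faces $a,b,c,d$ is unchanged, because the subtrees $P_1,P_2,P_3$ and their rays to infinity are not modified, so $a,b,c,d$ reappear as dual vertices in the same cyclic order. However, the edge $e$ now separates the faces $b$ and $d$ rather than $a$ and $c$, so its dual becomes the diagonal $(b,d)$. This replacement of $(a,c)$ by $(b,d)$ inside the quadrilateral $abcd$ is exactly a flip, and since the dual outside the quadrilateral is unaffected, the induced change on $T$ is precisely the claimed flip at the diagonal dual to $e$.

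The main obstacle is the careful face bookkeeping: verifying that the four faces $a,b,c,d$ persist as exterior corners of the same quadrilateral in the dual after the rotation, and that the pair of faces incident to $e$ swaps from $\{a,c\}$ to $\{b,d\}$ while all other adjacencies are preserved. Once this local identification is checked against both halves of \cref{fig:rotation}, the lemma follows from the strictly local nature of the rotation on $P$ and, correspondingly, on its plane dual $T$.
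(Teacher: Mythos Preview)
Your proposal is correct and follows essentially the same local argument as the paper: both identify the four dual regions $a,b,c,d$ around the edge from $v$ to its parent, observe that this edge duals to the diagonal $(a,c)$ of the quadrilateral $abcd$, and check against \cref{fig:rotation} that the rotation swaps it for $(b,d)$ while leaving everything outside the quadrilateral unchanged. The paper's version is terser and explicitly splits into the two cases where $v$ is a left or right child of its parent (handling the second by symmetry), which you should also mention for completeness.
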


\begin{proof}
Suppose $v$ is the left child of its parent $u$. If the regions of the plane dual surrounding $v$ and $u$ are $\{a,b,c,d\}$ as in \cref{fig:rotation}, then observe that a rotation at $v$ replaces the diagonal $(a,c)$ in the dual with $(b,d)$ while preserving all remaining diagonals. This is precisely a flip at the diagonal $(a,c)$. When $v$ is the right child of its parent, then a similar argument applies. 
\end{proof}

We now generalize the rotation operation to planar layouts of irreducible planar tanglegrams. Note that any such planar layout is determined by the two component plane binary trees $P_1$ and $P_2$, as all edges between matched leaves must be horizontal. Hence, we denote a planar layout simply as $(P_1,P_2)$. Furthermore, in the plane dual bijection of \cref{thm:tanglegramtotriangle}, if two vertices $u$ and $v$ form the roots of a proper subtanglegram, then the regions adjacent to their parent edges form the shared diagonal in the dual of $(P_1,P_2)$.

\begin{definition}\label{def:rotation}\normalfont
Let $(P_1,P_2)$ be a planar layout of an irreducible planar tanglegram. A \emph{rotation} at $u\in V(P_1)\cup V(P_2)$ is defined as
\begin{enumerate}[label=(\alph*)]
    \item rotate $u\in V(P_i)$, and 
    \item if a proper subtanglegram is formed, then rotate at the vertex $v\in V(P_j)$ whose subtree forms part of a proper subtanglegram, where $j\neq i$.
\end{enumerate}
\end{definition}

\begin{theorem}
Let $\mathcal{L}_n$ be the graph on planar layouts of irreducible planar tanglegrams of size $n$ with edges given by rotations. Then $\mathcal{L}_n$ is isomorphic to $\dtg_{n+1}$.
\end{theorem}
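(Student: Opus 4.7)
The plan is to build the isomorphism directly from the plane-dual bijection of Theorem \ref{thm:tanglegramtotriangle}. Let $\phi$ denote that bijection, sending a planar layout $(P_1,P_2)$ of an irreducible planar tanglegram of size $n$ to the ordered pair of disjoint triangulations $(T_1,T_2)$ of an $(n+1)$-gon obtained from the plane duals. Since $\phi$ is already a bijection on vertex sets, it suffices to show that the rotation adjacency in $\mathcal{L}_n$ corresponds to the flip adjacency in $\mathcal{D}_{n+1}$.

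The main ingredient is Lemma \ref{onerotation}: a rotation at $u \in V(P_i)$ performs exactly one diagonal flip in $T_i$ (at the diagonal separating the two regions adjacent to the parent edge of $u$) and leaves all other diagonals of $T_i$ unchanged. The crucial bridge is the remark already stated in the excerpt before Definition \ref{def:rotation}: in the plane dual, two vertices forming the roots of a proper subtanglegram are exactly those whose parent edges bound a common region on each side, so the presence of a proper subtanglegram after step (a) is equivalent to the new diagonal being shared between $T_1$ and $T_2$. I would make this precise by first checking that in an irreducible planar tanglegram layout, no shared diagonal can appear in $\phi(P_1,P_2)$, recovering Theorem \ref{thm:tanglegramtotriangle}, and then observing that after a single rotation in $P_i$, the layout fails to be irreducible if and only if the newly created diagonal of $T_i$ coincides with an existing diagonal of $T_j$.

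With that in place, I would verify the two cases of Definition \ref{def:rotation} in parallel with Definition \ref{flipdef}. If step (a) alone produces an irreducible layout, then the rotation corresponds to a single flip at $(a,b)_i$ where $(a,b)$ is the diagonal identified by Lemma \ref{onerotation}, since no further change is triggered on either side. If step (a) produces a proper subtanglegram, then the new diagonal $(a',b')$ in $T_i$ lies in $T_j$; the required rotation at the corresponding vertex $v \in V(P_j)$ in step (b) then flips $(a',b')$ out of $T_j$ by Lemma \ref{onerotation} applied to $P_j$, matching the double flip case in Definition \ref{flipdef}. In both cases I would also confirm that the output layout $\phi^{-1}$-images back to the pair produced by the corresponding flip, which amounts to checking that no further subtanglegrams are created (equivalently by Lemma \ref{2-flip}, the resulting pair of triangulations is disjoint).

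The main obstacle will be the careful bookkeeping in step (b): one must identify the specific vertex $v \in V(P_j)$ whose rotation realizes the second flip, and verify that its parent edge in $P_j$ is dual to exactly the diagonal $(a',b')$ created by the first rotation. This follows from the dual description of proper subtanglegrams combined with the observation that after step (a) of the rotation, the unique proper subtanglegram that can be created has its two roots in $P_i$ and $P_j$ matched by descendants, so the parent edges of these two roots bound the same pair of regions, namely $(a',b')$. Once this identification is established, the correspondence between rotations and flips is a symmetric relation (both operations are involutive by Lemma \ref{2-flip} and the evident involutivity of rotation), giving a graph isomorphism $\mathcal{L}_n \cong \mathcal{D}_{n+1}$.
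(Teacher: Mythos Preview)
Your proposal is correct and follows essentially the same approach as the paper: use the plane-dual bijection of \cref{thm:tanglegramtotriangle} on vertices, invoke \cref{onerotation} to translate a single rotation into a single diagonal flip, and use the observation preceding \cref{def:rotation} that a proper subtanglegram corresponds to a shared diagonal to match step (b) of the rotation with step (b) of the flip. The only minor imprecision is your phrase ``evident involutivity of rotation'': neither rotations nor flips are literally involutive, but what you need (and what \cref{2-flip} provides) is merely that the adjacency relation is symmetric, which is enough to upgrade the edge correspondence to a graph isomorphism.
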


\begin{proof}
A bijection between the vertices of $\mathcal{L}_n$ and $\dtg_{n+1}$ is given by \cref{thm:tanglegramtotriangle}, so it suffices to show that the rotation and flip operations correspond appropriately. Let $(P_1,P_2),(P_1',P_2')\in V(\mathcal{L}_n)$ with respective plane dual pairs of triangulations $(T_1,T_2),(T_1',T_2')\in V(\dtg_{n+1})$. We claim that $(P_1',P_2')$ can be obtained by some rotation in $(P_1,P_2)$ if and only if $(T_1',T_2')$ can be obtained by some flip in $(T_1,T_2)$.

Suppose $(P_1',P_2')$ can be obtained from $(P_1,P_2)$ by some rotation, which we assume without loss of generality is at some $v\in V(P_1)$. By \cref{onerotation}, we have that $T_1'$ is obtained from $T_1$ by some flip. If $(P_1',P_2)$ does not contain any proper subtanglegrams, then the result follows. Otherwise, a proper subtanglegram in $(P_1',P_2)$ corresponds to a shared diagonal in $(T_1',T_2)$, and we know that there is only one shared diagonal in $(T_1',T_2)$. Observe that there is a unique flip in $T_2$ that removes the shared diagonal in $(T_1',T_2)$. The rotation in $P_2$ described in \cref{def:rotation}(b) removes the proper subtanglegram in $(P_1',P_2)$, and hence this rotation in $P_2$ and the necessary flip in $T_2'$ must coincide. We conclude that $(T_1',T_2')$ is obtained from $(T_1,T_2)$ by some flip. A similar argument implies the converse. 
\end{proof}

\section{Open Problems}\label{openproblems}

Running the Markov chain on $\dtg_n$ from \cref{newmainthm} sufficiently many iterations allows for approximately uniform sampling of pairs of disjoint triangulations. Determining the number of iterations needed remains open, similar to the Markov chain in \cite{meanders}.

\begin{problem}
Determine the mixing time of the Markov chain from \cref{newmainthm} and the meanders Markov chain from \cite{meanders}.
\end{problem}

Note that a trivial upper bound for the mixing time of the Markov chain on $\dtg_n$ is $O(|V(\dtg_n)|^2)$ \cite[Proposition 10.28]{markov}. We suspect that this bound can be improved significantly because the mixing time for a random walk on $\tg_n$ is polynomial in $n$~\cite{mcshinetetali,molloy}, while $|V(\dtg_n)|$ grows rapidly with respect to $n$~\cite{countingplanar}. %Given that there are lower and upper bounds on the mixing time of a random walk on $\tg_n$ that are polynomial in $n$, it is plausible that corresponding results hold for $\dtg_n$.
The computer data given in \cref{tab:mixing} supports our suspicion that  $O(|V(\dtg_n)|^2)$ is not a useful upper bound for the mixing time of $\dtg_n$. 

\begin{table}[h!]
    \centering
    \begin{tabularx}{\linewidth}{|*{6}{>{\centering\arraybackslash}X|}}
    \hline 
        $n$ & $5$ & $6$ & $7$ & $8$ & 9  \\ \hline 
        $|V(\mathcal{D}_n)|$ & $10$ & $68$ & $546$ & $4872$ & $46782$ \\ \hline iterations & $3$ & $7$ & $14$ & $25$ & $39$\\ \hline 
        $\sigma_2$ & $0.5590...$ & $0.7287...$ & $0.8478...$ & $0.9512...$ & $0.9677...$ \\\hline 
    \end{tabularx}
    \caption{For each $n$, the number of iterations needed for the total variation distance from the uniform distribution to be smaller than $1/4$ regardless of the initial vertex chosen, and the second largest eigenvalue of the transition matrix.}
    \label{tab:mixing}
\end{table}

The Markov chains discussed in this paper allow for near-uniform generation of the corresponding elements. Another natural question is what properties a ``typical" element has. One surprising result of \cite{randomtanglegram} is that as $n\to\infty$, generating a tanglegram of size $n$ uniformly at random becomes similar to generating two rooted plane binary trees on $n$ leaves independently and uniformly at random. Konvalinka and Wagner used this result to establish asymptotic properties of tanglegrams.

\begin{problem}
Study properties of pairs of disjoint triangulations, meanders, and planar tanglegrams generated uniformly at random. In the case of planar tanglegrams, establish analogues of results in \cite{randomtanglegram}.
\end{problem}

Known enumerative results for meanders \cite[A005315]{oeis} and pairs of disjoint triangulations \cite{countingplanar} imply that these objects are not in bijection with each other. However, it is possible that other connections exist by applying certain bijections between Catalan objects. 

\begin{problem}
Explore connections between pairs of disjoint triangulations, meanders, and other pairs of Catalan objects satisfying some property.
\end{problem}

Finally, we pose a problem involving our construction of $\dtg_n$. Our techniques on $\dtg_n$ can be generalized much further in the broader language of polyhedra.

\begin{problem} \label{op:polytope}
 For a given simple polytope $P$, explore the graph on pairs 
\[\{(u,v): u, v \in V(P), \, u \text{ and } v \text{ are not contained in the same facet}\}\]
with edges constructed in a manner similar to in $\dtg_n$. Determine what analogues of \cref{mainthm1} hold and
 when this is the graph of an abstract polytope.
\end{problem}

\section*{Acknowledgements}

This work was completed in part at the 2022 Graduate Research Workshop in Combinatorics, which was supported in part by NSF grant \#1953985 and a generous award from the Combinatorics Foundation. The authors acknowledge helpful discussions with GRWC participants Jesse Campion Loth and Samuel Mohr. The authors also thank Sara Billey, Herman Chau, Jes\'{u}s De Loera, and the anonymous referee for valuable feedback on earlier drafts of this manuscript.

Alexander E. Black was supported by the NSF GRFP and NSF DMS-1818969. Kevin Liu was supported by NSF DMS-1764012. Michael Wigal was supported by NSF GRFP under Grant No. DGE-1650044. Mei Yin was supported by the University of Denver's Professional Research Opportunities for Faculty Fund 80369-145601.

\printbibliography

\end{document}